\newcommand{\E}{\mathbb{E}}
\newcommand{\R}{\mathbb{R}}
\newcommand{\N}{\mathbb{N}}
\newcommand{\T}{\mathbb{T}}
\newcommand{\dd}{\mathrm{d}}
\newtheorem{theo}{Theorem}[section]
\newtheorem{cor}[theo]{Corollary}
\newtheorem{rem}[theo]{Remark}
\newtheorem{propo}[theo]{Proposition}
\newtheorem{lemma}[theo]{Lemma}
\newtheorem{ass}{Assumption}
\begin{document}

\title{Uniform error bounds for numerical schemes applied to multiscale SDEs in a Wong--Zakai diffusion approximation regime}

\author{Charles-Edouard Br\'ehier}
\address{Univ Lyon, Université Claude Bernard Lyon 1, CNRS UMR 5208, Institut Camille Jordan, 43 blvd. du 11 novembre 1918, F--69622 Villeurbanne cedex, France}
\email{brehier@math.univ-lyon1.fr}

\begin{abstract}
We study a family of numerical schemes applied to a class of multiscale systems of stochastic differential equations. When the time scale separation parameter vanishes, a well-known homogenization or Wong--Zakai diffusion approximation result states that the slow component of the considered system converges to the solution of a stochastic differential equation driven by a real-valued Wiener process, with Stratonovich interpretation of the noise. We propose and analyse schemes for effective approximation of the slow component. Such schemes satisfy an asymptotic preserving property and generalize the methods proposed in the recent article~\cite{BR}. We fill a gap in the analysis of these schemes and prove strong error estimates, which are uniform with respect to the time scale separation parameter.
\end{abstract}

\maketitle

\section{Introduction}

In this article, we consider multiscale systems of stochastic differential equations of the type
\begin{equation}\label{eq:SDEintro}
\left\lbrace
\begin{aligned}
dX^\epsilon(t)&=\frac{\sigma(X^\epsilon(t))m^\epsilon(t)}{\epsilon}dt\\
dm^\epsilon(t)&=-\frac{m^\epsilon(t)}{\epsilon^2}dt+\frac{1}{\epsilon}d\beta(t),
\end{aligned}
\right.
\end{equation}
where $\epsilon\in(0,\epsilon_0)$ is a time-scale separation parameter. On the one hand, $X^\epsilon(t)$ takes values in the $d$-dimensional torus $\T^d=\mathbb{R}^d/(2\pi\mathbb{Z})^d$ in arbitrary dimension $d\in\N$, and $\sigma:\T^d\to\R^d$ is a mapping which is at least of class $\mathcal{C}^3$. On the other hand, the Wiener process $\beta$ and the Ornstein--Uhlenbeck process $m^\epsilon$ are real-valued. The objective of this article is to study numerical schemes for the approximation of the component $X^\epsilon$, for arbitrary values of the time-scale separation parameter $\epsilon$, in particular when it vanishes. This is not a trivial task since the component $m^\epsilon$ evolves at the fast time scale $t/\epsilon^2$, and a crude discretization would impose stringent conditions on the time-step size $\Delta t$.

It is a well-known result in the analysis of multiscale stochastic systems that $X^\epsilon(t)$ converges, at least in distribution, when $\epsilon\to 0$, to $X^0(t)$, for all $t\ge 0$, where $X^0$ is the solution of the stochastic differential equation
\begin{equation}\label{eq:limitingSDEintro}
dX^0(t)=\sigma(X^0(t))\circ d\beta(t)
\end{equation}
where the noise is interpreted in the sense of Stratonovich. We refer for instance to~\cite[Section~11.7.3]{PavliotisStuart} for a description of this convergence result, and see Proposition~\ref{propo:cvSDE} below for a precise statement, where convergence is understood in a stronger sense than convergence in distribution. The convergence result $X^\epsilon\to X^0$ belongs to the class of Wong--Zakai approximation results (the Stratonovich noise $\circ d\beta(t)$ is approximated by a smoother version $d\zeta^\epsilon(t)$), it is also sometimes called a diffusion approximation result.

In order to define numerical schemes which perform better than crude methods when $\epsilon$ varies and may vanish, it is relevant to resort to the notion of asymptotic preserving schemes as studied in the recent article~\cite{BR}: if $\Delta t=T/N$ denotes the time-step size with given $T\in(0,\infty)$ and $N\in\N$, one has a commutative diagram property
\[
\begin{CD}
X_N^{\epsilon,\Delta t}     @>{N \to \infty}>> X^\epsilon(T) \\
@VV{\epsilon\to 0}V        @VV{\epsilon\to 0}V\\
X_N^{0,\Delta t}     @>{N \to \infty}>> X^0(T),
\end{CD}
\]
where $\bigl(X_n^{\epsilon,\Delta t},m_n^{\epsilon,\Delta t}\bigr)_{0\le n\le N}$ is the scheme for given values of $\epsilon$ and $\Delta t$, and one needs to check that
\begin{itemize}
\item the scheme is consistent for any value of $\epsilon>0$ when $\Delta t\to 0$,
\item there exists a limiting scheme $\bigl(X_n^{0,\Delta t}\bigr)_{0\le n\le N}$ when $\epsilon\to 0$ for any value of $\Delta t>0$,
\item the limiting scheme is consistent with the limiting equation when $\Delta t\to 0$.
\end{itemize}
As explained in~\cite{BR}, the last property may fail to hold for some crude methods, for instance using a standard explicit Euler scheme for the discretization of the component $X^\epsilon$ does not capture the It\^o--Stratonovich correction term.

In this article, we study numerical schemes which can be written as
\begin{equation}\label{eq:schemeintro}
\left\lbrace
\begin{aligned}
X_{n+1}^{\epsilon,\Delta t}&=\Phi(\frac{\Delta tm_{n+1}^{\epsilon,\Delta t}}{\epsilon},X_n^{\epsilon,\Delta t})\\
m_{n+1}^{\epsilon,\Delta t}&=m_n^{\epsilon,\Delta t}-\frac{\Delta t}{\epsilon^2}m_{n+1}^{\epsilon,\Delta t}+\frac{\Delta\beta_n}{\epsilon},
\end{aligned}
\right.
\end{equation}
where $\Delta\beta_n=\beta(t_{n+1})-\beta(t_n)$, $t_n=n\Delta t$. The mapping $\Phi$ is an integrator associated with the ordinary differential equation
\[
\frac{dx(t)}{dt}=\sigma(x(t)),
\]
which is assumed to be at least of order $2$, see Assumption~\ref{ass:integrator} below for a precise statement.

Let us state the main result of this article, see Theorem~\ref{theo:main} for a precise statement: one has strong error estimates
\begin{equation}\label{eq:mainintro}
\underset{\epsilon\in(0,\epsilon_0)}\sup~\bigl(\E[\dd(X_N^{\epsilon,\Delta t},X^\epsilon(T))]\bigr)^{\frac1p}\le C_p(T)\Delta t^{\frac12}
\end{equation}
which are uniform with respect to the time-scale separation parameter, for all $p\in[1,\infty)$ and $T\in(0,\infty)$. This means that $X^\epsilon(T)$ can be approximated by $X_N^{\epsilon,\Delta t}$ with a cost which is independent of $\epsilon$. One also checks (see Proposition~\ref{propo:AP}) that the asymptotic preserving property is satisfied, where the convergence results in the diagram above are all understood in the sense of convergence in $L^p(\Omega)$.

Note that the construction of the scheme~\eqref{eq:schemeintro} is inspired by the expression
\[
X^\epsilon(t)=\varphi(\zeta^\epsilon(t),X^\epsilon(0))
\]
for the solution of~\eqref{eq:SDEintro}, where $\zeta^\epsilon(t)=\epsilon^{-1}\int_0^t m^\epsilon(s)ds$, and $\varphi$ is the flow map associated with the ordinary differential equation above. One also has
\[
X^0(t)=\varphi(\beta(t),X(0)),
\]
and the limiting scheme is given by
\[
X_{n+1}^{0,\Delta t}=\Phi(\Delta\beta_n,X_n^{0,\Delta t}).
\]
All the expressions above require to consider real-valued Ornstein--Uhlenbeck process $m^\epsilon$ and Wiener process $\beta$, and also that the evolution of $m^\epsilon(t)$ is independent of the slow component $X^\epsilon(t)$. Even if this situation is restrictive, the analysis requires some non trivial techniques. The analysis of more general situations may be investigated in future works.

Let us give the most crucial arguments of the proof of~\eqref{eq:mainintro}, omitting technicalities. In particular the objective of this discussion is to illustrate why assuming that $\Phi$ is an integrator of order at least $2$ for the associated differential equation above is fundamental. That assumption can be written
\begin{equation}\label{eq:condintro}
\Phi(t,x)-\varphi(t,x)={\rm O}(|t|^3)
\end{equation}
when $|t|\to 0$, see~\eqref{eq:ass_integrator-order} from Assumption~\ref{ass:integrator} for a more precise statement. For an order $1$ method, one would only have ${\rm O}(|t|^2)$ in the right-hand side of~\eqref{eq:condintro}. Using inequalities
\[
\Delta \beta_n={\rm O}(\Delta t^{\frac12}),\quad \frac{\Delta tm_{n+1}^\epsilon}{\epsilon}={\rm O}(\Delta t^{\frac12})
\]
one may already understand why~\eqref{eq:condintro} may be useful to obtain the strong error estimates~\eqref{eq:mainintro} after summation of local error terms of size ${\rm O}(\Delta t^{\frac32})$ to obtain a global error term of size ${\rm O}(\Delta t^{\frac12})$. An additional ingredient of the proof is Lemma~\ref{lem:integrator}, which gives upper bounds
\[
\big|\Phi(t_1+t_2,x)-\Phi(t_1,\Phi(t_2,x)\big|={\rm O}(t_1^2|t_2|+t_2^2|t_1|)
\]
for the integrator (which does not satisfy a flow property, contrary to $\varphi$). Note also that the inequality
\[
\epsilon\|m^\epsilon(t_N)-m_N^{\epsilon,\Delta t}\|={\rm O}(\Delta t^{\frac12})
\]
is used, see Lemma~\ref{lem:cv_m} for a precise statement. The inequality above is combined with~\eqref{eq:condintro} in the proof of~\eqref{eq:mainintro}, again local error terms of size ${\rm O}(\Delta t^{\frac32})$ give a global error term of size ${\rm O}(\Delta t^{\frac12})$. More technical arguments and auxiliary results are required and are omitted in this sketch of proof, see Section~\ref{sec:proofmain} for details.

Note that the recent preprint~\cite{preprintSK} is also concerned with the proof of uniform (strong) error estimates for a class of multiscale SDE systems in a diffusion approximation regime. However, the structure of the systems, the results and the techniques of proof are substantially different, which justifies to perform the analysis in separate articles.

The analysis of numerical methods for multiscale stochastic differential equations is an active research area. The recent articles~\cite{ACLM} and~\cite{Laurent} propose uniformly accurate methods for SDE systems which are different from~\eqref{eq:SDEintro} considered in this article. The recent article~\cite{BR} has introduced a notion of asymptotic preserving schemes which applies to~\eqref{eq:SDEintro}, and some uniform error estimates were proved for SDE systems in an averaging regime. We also refer to the PhD thesis~\cite{RR-thesis} for supplementary results and numerical experiments. In this article, as already mentioned, we fill a gap in~\cite{BR} and prove some uniform error estimates in the diffusion approximation regime, for the scheme~\eqref{eq:schemeintro} applied to~\eqref{eq:SDEintro}. The articles~\cite{FrankGottwald:18} and~\cite{LiAbdulleE:08} illustrate why effective numerical approximation of solutions of SDEs may be more subtle than for deterministic problems. Many other techniques have been introduced to design effective methods for the numerical approximation of multiscale SDE systems, let us mention spectral methods~\cite{AbdullePavliotisVaes:17}, heterogeneous multiscale methods~\cite{ELiuVandenEijnden:05}, projective integration methods~\cite{GivonKevrekidisKupferman:06}, equation-free methods~\cite{KevrekidisAl:03}, parareal methods~\cite{LegollLelievreMyerscoughSamaey:20}, micro-macro acceleration methods~\cite{VandecasteeleZielinskiSamaey:20} for instance. We refer to the monographs~\cite{Gobet,KloedenPlaten,MilsteinTretyakov,
Pages} for general results on numerical methods applied to stochastic differential equations.

This article is organized as follows. Section~\ref{sec:setting} provides the main notation and assumptions. Convergence results, when the time scale separation parameter $\epsilon$ and the time-step size $\Delta t$ vanish are stated in Section~\ref{sec:results}. The main result of this article (strong error estimates in terms of $\Delta t$, uniformly with respect to $\epsilon$, see Theorem~\ref{theo:main}) is stated in Section~\ref{sec:results-main}. Auxiliary results are given in Section~\ref{sec:aux}. Some of the results are proved in Section~\ref{sec:proofs}. The proof of the main result is provided in Section~\ref{sec:proofmain}. Numerical results are reported in Section~\ref{sec:num}.

\section{Setting}\label{sec:setting}

Let $d\in\N$ be an integer. Denote by $\T^d$ the $d$-dimensional torus $\mathbb{R}^d/\mathbb{Z}^d$. If $x_1,x_2\in\T^d$, the distance between $x_1$ and $x_2$ is denoted by $\dd(x_1,d_2)$.

The time-scale separation parameter is denoted by $\epsilon$. Without loss of generality, it is assumed that $\epsilon\in(0,\epsilon_0)$, where $\epsilon_0$ is an arbitrary positive parameter. The time-step size of the numerical schemes is denoted by $\Delta t$. It is assumed that $\Delta t=T/N$ where $T\in(0,\infty)$ is an arbitrary positive real number, and $N\in\N$ is an integer. For all $n\in\{0,\ldots,N\}$, let $t_n=n\Delta t$. Without loss of generality, it is assumed that $\Delta t\in(0,\Delta t_0)$, where $\Delta t_0=T/N_0$ is an arbitrary positive real number. Equivalently, it is assumed that $N\ge N_0$.

Let $\bigl(\beta(t)\bigr)_{t\ge 0}$ be a real-valued standard Wiener process, defined on a probability space $(\Omega,\mathcal{F},\mathbb{P})$ which satisfies the usual conditions. The expectation operator is denoted by $\E[\cdot]$. In the proof of the error estimates, it is convenient to use the following notation. For any real number $p\in[1,\infty)$ and any $\T^d$-valued random variables $\mathcal{X}_1,\mathcal{X}_2$, set
\[
\dd_p(\mathcal{X}_1,\mathcal{X}_2)=\bigl(\E[\dd(\mathcal{X}_1,\mathcal{X}_2)^p]\bigr)^{\frac1p}.
\]
Similarly, for any real number $p\in[1,\infty)$ and any real-valued random variable $\mathcal{Y}$, set
\[
\|\mathcal{Y}\|_p=\bigl(\E[|\mathcal{Y}|^p]\bigr)^{\frac1p}.
\]
The values of constants $C\in(0,\infty)$ may change from line to line in the proofs below. However note that they are always independent of the parameters $\epsilon$ and $\Delta t$.

\subsection{The multiscale SDE system}\label{sec:setting-SDE}

We consider the following class of multiscale stochastic differential equations systems
\begin{equation}\label{eq:SDE}
\left\lbrace
\begin{aligned}
dX^\epsilon(t)&=\frac{\sigma(X^\epsilon(t))m^\epsilon(t)}{\epsilon}dt\\
dm^\epsilon(t)&=-\frac{m^\epsilon(t)}{\epsilon^2}dt+\frac{1}{\epsilon}d\beta(t),
\end{aligned}
\right.
\end{equation}
where $X^\epsilon(t)\in\T^d$ and $m^\epsilon(t)\in\R$, for all $t\ge 0$. The component $m^\epsilon$ is solution of a one-dimensional stochastic differential equation, and is an Ornstein--Uhlenbeck process. It does not depend on the component $X^\epsilon$. The mapping $\sigma$ satisfies Assumption~\ref{ass:sigma} below.
\begin{ass}\label{ass:sigma}
Let $\sigma:\T^d\to \R^d$ be a mapping of class $\mathcal{C}^3$.
\end{ass}
The initial values for the SDE system~\eqref{eq:SDE} are given by $X^\epsilon(0)=x_0^\epsilon$ and $m^\epsilon(0)=m_0^\epsilon$, such that Assumption~\ref{ass:init} below is satisfied.
\begin{ass}\label{ass:init}
There exists $x_0^0\in\T^d$ such that
\[
x_0^\epsilon\underset{\epsilon\to 0}\to x_0^0.
\]
Moreover, one has the following uniform upper bound:
\[
\underset{\epsilon\in(0,\epsilon_0)}\sup~|m_0^\epsilon|<\infty.
\]
\end{ass}
It is assumed that the initial values $x_0^\epsilon\in\T^d$ and $m_0^\epsilon\in\R$ are deterministic. The case of random initial values, independent of the Wiener process $\bigl(\beta(t)\bigr)_{t\ge 0}$, can be treated by a standard conditioning argument, provided that suitable moment bounds are satisfied. This treatment is omitted in the sequel. Note that the constants appearing in the estimates below may depend on the value of $\underset{\epsilon\in(0,\epsilon_0)}\sup~|m_0^\epsilon|$, however this dependence is not indicated explicitly.

It is straightforward to check that the SDE system~\eqref{eq:SDE} admits a unique global solution. Let us give an expression of this solution which plays a crucial role in this article, for the construction and the analysis of numerical schemes which are effective when $\epsilon$ varies and vanishes.

For all $t\ge 0$, define
\begin{equation}\label{eq:zeta}
\zeta^\epsilon(t)=\frac{1}{\epsilon}\int_{0}^{t}m^\epsilon(s)ds,
\end{equation}
where the Ornstein--Uhlenbeck process $m^\epsilon$ is given by
\begin{equation}\label{eq:mepsilon}
m^\epsilon(t)=e^{-\frac{t}{\epsilon^2}}m_0^\epsilon+\frac{1}{\epsilon}\int_{0}^{t}e^{-\frac{t-s}{\epsilon^2}}d\beta(s).
\end{equation}
Let $(t,x)\in\R\times\T^d\mapsto \varphi(t,x)$ be the flow map associated with ordinary differential equation (ODE)
\begin{equation}\label{eq:ODE}
\frac{dx(t)}{dt}=\sigma(x(t)),
\end{equation}
meaning that for all $t\in\R$ and $x\in\T^d$ one has
\begin{equation}\label{eq:ODEvarphi}
\partial_t\varphi(t,x)=\sigma(\varphi(t,x)).
\end{equation}
Since $\sigma$ is Lipschitz continuous by Assumption~\ref{ass:sigma}, the mapping $\varphi$ is well-defined. The evolution equation for $X^\epsilon$ can be interpreted as
\[
dX^\epsilon(t)=\sigma(X^\epsilon(t))d\zeta^\epsilon(t)
\]
using the definition~\eqref{eq:zeta} of $\zeta^\epsilon$. Owing to the definition~\eqref{eq:ODEvarphi} of $\varphi$ and to the chain rule, the unique global solution of~\eqref{eq:SDE}, which can be formulated as follows: for all $t\ge 0$ one has
\begin{equation}\label{eq:SDE-solution}
X^\epsilon(t)=\varphi(\zeta^\epsilon(t),x_0^\epsilon).
\end{equation}

It is worth noting that the expression~\eqref{eq:SDE-solution} and the arguments above require the definition of the flow map $\varphi$ for all values $t\in\R$ in the real line of the time variable. Finally, recall that the flow map $\varphi$ satisfies the following flow property: for all $t_1,t_2\in\R$, one has
\begin{equation}\label{eq:flow-varphi}
\varphi(t_1+t_2,\cdot)=\varphi(t_2,\varphi(t_1,\cdot)).
\end{equation}

\subsection{The numerical scheme}\label{sec:setting-scheme}

Let us now describe the class of proposed numerical schemes. The objective is to approximate the slow component $X^\epsilon$ solving the SDE~\eqref{eq:SDE}, uniformly with respect to the time-scale parameter $\epsilon\in(0,\epsilon_0)$. The approximation needs to be consistent for any fixed $\epsilon$, and to avoid stringent stability conditions depending on $\epsilon$ on the time-step size $\Delta t$. The definition of the numerical scheme is inspired by the expression~\eqref{eq:SDE-solution} for $X^\epsilon(t)$, and is based on a mapping $\Phi:\R\times\T^d\to\T^d$ called the integrator, associated with the ODE~\eqref{eq:ODE}. Let us first state the main conditions required on the integrator.

\begin{ass}\label{ass:integrator}
Let $\Phi:\R\times\T^d\to\T^d$ be a mapping of class $\mathcal{C}^3$, such that there exists $C\in(0,\infty)$ such that for all $t\in\R$ and all $x\in\T^d$, one has
\begin{equation}\label{eq:ass_integrator-order}
\dd\bigl(\Phi(t,x),\varphi(t,x)\bigr)\le C|t|^3e^{C|t|},
\end{equation}
and for all $t_1,t_2\in \R$ and $x_1,x_2\in\T^d$, one has
\begin{equation}\label{eq:ass_integrator-Lip}
\dd\bigl(\Phi(t_2,x_2),\Phi(t_1,x_1)\bigr)\le C\bigl(|t_2-t_1|+\dd(x_2,x_1)\bigr)e^{C(|t_1|+|t_2|)}.
\end{equation}
Finally, for all $i,j\in\{0,1,2,3\}$ with $i+j\le 3$, one has for all $t\in\R$, the upper bound
\begin{equation}\label{eq:ass_integrator-derivees}
\underset{x\in\R^d}\sup~|\partial_t^i\partial_x^j\Phi(t,x)|\le Ce^{C|t|}.
\end{equation}
\end{ass}
If the integrator $\Phi$ satisfies Assumption~\eqref{ass:integrator}, the numerical scheme
\begin{equation}\label{eq:ODEscheme}
x_{n+1}=\Phi(\Delta t,x_n)
\end{equation}
provides an approximation $x_N$ of the solution $\varphi(t_N,x_0)$ of the ordinary differential equation~\eqref{eq:ODE}, when the time-step size $\Delta t>0$ vanishes. On the one hand, the conditions~\eqref{eq:ass_integrator-Lip} and~\eqref{eq:ass_integrator-derivees} are technical requirements for the analysis below. They are variants of properties satisfied by the flow map $\varphi$. On the other hand, the condition~\eqref{eq:ass_integrator-order} is fundamental: it means that the numerical scheme~\eqref{eq:ODEscheme} is (at least) a second-order scheme for the approximation of the solution of the ODE~\eqref{eq:ODE}: for any initial value $x_0\in\T^d$ and all $T\in(0,\infty)$, there exists $C(T,x_0)\in(0,\infty)$ such that
\[
\dd(x_N,\varphi(t_N,x_0))\le C(T,x_0)\Delta t^2,
\]
where we recall that $T=N\Delta t$. As will explained below, the second-order accuracy of the integrator $\Phi$ related to the ODE~\eqref{eq:ODE} plays a crucial role in this article.

Let us give some examples of integrators which satisfy Assumption~\ref{ass:integrator}: introduce
\begin{itemize}
\item the second-order Taylor scheme: $\Phi(t,x)=x+t\sigma(x)+\frac{t^2}{2}\sigma'(x)\sigma(x)$,
\item the explicit midpoint scheme: $\Phi(t,x)=x+t\sigma(x+\frac{t}{2}\sigma(x))$,
\item Heun's method: $\Phi(t,x)=x+\frac{t}{2}\bigl(\sigma(x)+\sigma(x+t\sigma(x))\bigr)$,
\end{itemize}
for all $t\in\R$ and $x\in\T^d$. For the three examples above, the exponential growth with respect to $t_1,t_2$ in the right-hand side of~\eqref{eq:ass_integrator-Lip} and with respect to $t$ in the right-hand side of~\eqref{eq:ass_integrator-derivees} can be replaced by a polynomial growth. Obviously, $\Phi(t,x)=\varphi(t,x)$ is also a possible choice, if the flow map $\varphi$ is known.

However, note that standard explicit Euler integrator, given by $\Phi^{\rm E}(t,x)=x+t\sigma(x)$, does not satisfy Assumption~\ref{ass:integrator}. Indeed it is in general a first-order integrator (except if $\sigma$ is constant), and does not satisfy~\eqref{eq:ass_integrator-order}: instead one only has
\[
\dd\bigl(\Phi^{\rm E}(t,x),\varphi(t,x)\bigr)\le C|t|^2e^{C|t|}
\]
for all $t\in\R$ and $x\in\T^d$.

One may also use splitting schemes to define integrators. Assume that $\sigma=\sigma_1+\sigma_2$ and let $\varphi_1$ and $\varphi_2$ be the flow maps associated with the ODEs $\frac{dx_1(t)}{dt}=\sigma_1(x(t))$ and $\frac{dx_2(t)}{dt}=\sigma_2(x(t))$ respectively. The Strang splitting integrator $\Phi(t,\cdot)=\varphi_2(\frac{t}{2},\cdot)\circ\varphi_1(t,\cdot)\circ\varphi_2(\frac{t}{2},\cdot)$ also satisfies Assumption~\ref{ass:integrator}. However, in general the Lie--Trotter splitting integrator $\Phi^{\rm LT}(t,\cdot)=\varphi_2(t,\cdot)\circ\varphi_1(t,\cdot)$ is only of order $1$ and does not satisfy Assumption~\ref{ass:integrator}, like the standard explicit Euler integrator.

We are now in position to introduce the scheme studied in this article, for the approximation of the solution of the SDE~\eqref{eq:SDE}:
\begin{equation}\label{eq:scheme}
\left\lbrace
\begin{aligned}
X_{n+1}^{\epsilon,\Delta t}&=\Phi(\frac{\Delta tm_{n+1}^{\epsilon,\Delta t}}{\epsilon},X_n^{\epsilon,\Delta t})\\
m_{n+1}^{\epsilon,\Delta t}&=m_n^{\epsilon,\Delta t}-\frac{\Delta t}{\epsilon^2}m_{n+1}^{\epsilon,\Delta t}+\frac{\Delta\beta_n}{\epsilon}
\end{aligned}
\right.
\end{equation}
where $\Delta\beta_n=\beta(t_{n+1})-\beta(t_n)$, with $t_n=n\Delta t$. The initial values are given by $X_0^{\epsilon,\Delta t}=x_0^\epsilon=X^\epsilon(0)$ and $m_0^{\epsilon,\Delta t}=m_0^\epsilon=m^\epsilon(0)$, thus they do not depend on the time-step size $\Delta t$.

The construction of the scheme~\eqref{eq:SDE} is motivated by the identity
\[
X^\epsilon(t_{n+1})=\varphi\bigl(X^\epsilon(t_n),\zeta^\epsilon(t_{n+1})-\zeta^\epsilon(t_n)\bigr)=\varphi\bigl(X^\epsilon(t_n),\frac{1}{\epsilon}\int_{t_n}^{t_{n+1}}m^\epsilon(s)ds\bigr)
\]
which is satisfied by the solution of the SDE~\eqref{eq:SDE}, for all $n\in\{0,\ldots,N-1\}$. The first equality follows from the expression~\eqref{eq:SDE-solution} and the flow property~\eqref{eq:flow-varphi}, whereas the second equality follows from the definition~\eqref{eq:zeta} of the process $\zeta^\epsilon$. The construction of the scheme~\eqref{eq:scheme} is a straightforward combination of two approximations. On the one hand, the flow map $\varphi$ is replaced by the integrator $\Phi$. On the other hand, the component $m^\epsilon$ is discretized using an implicit Euler scheme, and the integral $\int_{t_n}^{t_{n+1}}m^\epsilon(s)ds$ is approximated by a simple quadrature rule (which is adapted to the choice of the implicit Euler scheme). Even if the discretization of the component $m^\epsilon$ is implicit, in practice it is explicit using the expression
\[
m_{n+1}^\epsilon=\frac{1}{1+\frac{\Delta t}{\epsilon^2}}\bigl(m_n^\epsilon+\frac{\Delta\beta_n}{\epsilon}\bigr).
\]

The scheme~\eqref{eq:scheme} is a generalization of the scheme given in~\cite[Corollary~3.15]{BR}, which corresponds to choosing Heun's integrator for the mapping $\Phi$.

\section{Convergence results}\label{sec:results}

In Section~\ref{sec:results-epsilon}, we state results concerning the behavior of $X^\epsilon(t)$ and $X_n^{\epsilon,\Delta t}$ when the time scale separation parameter $\epsilon$ vanishes: we exhibit a limiting stochastic differential equation and a limiting numerical scheme respectively. Then, in Section~\ref{sec:results-main}, we state the main result of this article, giving strong error estimates, in terms of the time-step size $\Delta t$, uniformly with respect to the time scale separation parameter $\epsilon$. Finally, in Sections~\ref{sec:results-lim} and~\ref{sec:results-AP}, we give consequences of the main result, and show that the numerical scheme~\eqref{eq:scheme} is asymptotic preserving. The proofs of the results in Section~\ref{sec:results-epsilon} are postponed to Section~\ref{sec:proofs}, since they exploit some of the auxiliary results from Section~\ref{sec:aux}. The proof of the main result, Theorem~\ref{theo:main}, stated in Section~\ref{sec:results-main}, is postponed to Section~\ref{sec:proofmain}.

\subsection{Asymptotic behavior when the time scale separation vanishes}\label{sec:results-epsilon}

In this section, we study the behavior of the $X^\epsilon(t)$ and $X_n^\epsilon$ when the time scale separation parameter $\epsilon$ vanishes and describe associated evolution equations for the limits.

Let us first focus on the behavior of $X^\epsilon(t)$. Introduce the stochastic differential equation
\begin{equation}\label{eq:limitingSDE}
dX^0(t)=\sigma(X^0(t))\circ d\beta(t)
\end{equation}
with initial value $X^0(0)=x_0^0\in\T^d$ (given in Assumption~\ref{ass:init}). The noise is interpreted in the Stratonovich sense, and the SDE can be written in It\^o form
\[
dX(t)=\frac12\sigma'(X(t))\sigma(X(t))dt+\sigma(X(t))d\beta(t).
\]
The solution of the SDE~\eqref{eq:limitingSDE} is expressed as follows, using the flow map $\varphi$ introduced in Section~\ref{sec:setting-SDE}: for all $t\ge 0$, one has
\begin{equation}\label{eq:limitingSDE-solution}
X(t)=\varphi(\beta(t),x_0^0).
\end{equation}
The identity~\eqref{eq:limitingSDE-solution} is a straightforward consequence of the chain rule associated with the Stratonovich convention and of the definition of $\varphi$ as the flow map associated with the ODE~\eqref{eq:ODE}. One has the following result.
\begin{propo}\label{propo:cvSDE}
Let Assumptions~\ref{ass:sigma} and~\ref{ass:init} be satisfied. For all $T\in(0,\infty)$ and $p\in[1,\infty)$, there exists $C_p(T)\in(0,\infty)$ such that
\[
\underset{0\le t\le T}\sup~\dd_p(X^\epsilon(t),X(t))\le C_p(T)\bigl(\epsilon+\dd(x_0^\epsilon,x_0^0)\bigr)\underset{\epsilon\to 0}\to 0.
\]
\end{propo}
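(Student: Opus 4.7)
The plan is to exploit the explicit representation formulas $X^\epsilon(t) = \varphi(\zeta^\epsilon(t), x_0^\epsilon)$ from \eqref{eq:SDE-solution} and $X(t) = \varphi(\beta(t), x_0^0)$ from \eqref{eq:limitingSDE-solution}, so that the whole estimate reduces to comparing the two pairs of arguments of the same flow map $\varphi$.

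First I would establish the key identity
\[
\zeta^\epsilon(t) - \beta(t) = \epsilon\bigl(m_0^\epsilon - m^\epsilon(t)\bigr).
\]
This follows by integrating the SDE for $m^\epsilon$ in \eqref{eq:SDE} between $0$ and $t$: one gets $\epsilon^2(m^\epsilon(t)-m_0^\epsilon) = -\int_0^t m^\epsilon(s)\,ds + \epsilon\beta(t)$, and dividing by $\epsilon$ yields the identity via the definition \eqref{eq:zeta} of $\zeta^\epsilon$. Since $m^\epsilon(t)$ is Gaussian with mean $e^{-t/\epsilon^2}m_0^\epsilon$ and variance $(1-e^{-2t/\epsilon^2})/2 \le 1/2$, Assumption~\ref{ass:init} gives uniform moment bounds $\sup_{\epsilon\in(0,\epsilon_0),\,t\ge 0}\|m^\epsilon(t)\|_p < \infty$, hence
\[
\sup_{0\le t\le T}\|\zeta^\epsilon(t)-\beta(t)\|_p \le C_p\,\epsilon.
\]

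Next I would compare $\varphi(\zeta^\epsilon(t), x_0^\epsilon)$ with $\varphi(\beta(t), x_0^0)$ via the triangle inequality, splitting through the intermediate point $\varphi(\beta(t), x_0^\epsilon)$. Using the flow property \eqref{eq:flow-varphi}, the first piece becomes $\dd\bigl(\varphi(\zeta^\epsilon(t)-\beta(t), \varphi(\beta(t), x_0^\epsilon)), \varphi(\beta(t), x_0^\epsilon)\bigr)$, which is bounded by $\|\sigma\|_\infty |\zeta^\epsilon(t)-\beta(t)|$ since $\partial_t\varphi = \sigma\circ\varphi$ is uniformly bounded (Assumption~\ref{ass:sigma} and compactness of $\T^d$). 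For the second piece, the Gronwall estimate applied to the variational equation associated with \eqref{eq:ODEvarphi} yields the Lipschitz-in-initial-condition bound $\dd(\varphi(s,x), \varphi(s,y)) \le e^{\|\sigma'\|_\infty |s|}\dd(x,y)$, hence
\[
\dd(X^\epsilon(t), X(t)) \le \|\sigma\|_\infty\, |\zeta^\epsilon(t)-\beta(t)| + e^{\|\sigma'\|_\infty |\beta(t)|}\,\dd(x_0^\epsilon, x_0^0).
\]

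Taking $L^p(\Omega)$ norms and combining the two pieces, one uses the Gaussian tail of $\beta(t)$ to get $\sup_{0\le t\le T}\|e^{\|\sigma'\|_\infty |\beta(t)|}\|_{p} \le C_p(T) < \infty$ for every $p\in[1,\infty)$, together with the $L^p$ bound on $\zeta^\epsilon(t)-\beta(t)$ derived above, to conclude. Collecting terms and taking the supremum over $t\in[0,T]$ gives the claimed estimate with a constant of the form $C_p(T) = C\bigl(\|\sigma\|_\infty + \|e^{\|\sigma'\|_\infty |\beta(T)|}\|_p\bigr)$. The only mildly delicate point is the random Lipschitz factor $e^{\|\sigma'\|_\infty |\beta(t)|}$, but its $L^p$-integrability is immediate from Gaussian tails, so no real obstacle arises; the argument is essentially a pathwise reduction to the clean identity for $\zeta^\epsilon(t) - \beta(t)$.
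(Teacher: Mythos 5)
Your proposal is correct and follows essentially the same route as the paper: the paper also reduces the claim to the representations \eqref{eq:SDE-solution} and \eqref{eq:limitingSDE-solution}, derives the identity $\zeta^\epsilon(t)-\beta(t)=\epsilon(m_0^\epsilon-m^\epsilon(t))$ with the resulting bound $\sup_{t\ge 0}\|\zeta^\epsilon(t)-\beta(t)\|_p\le C_p\epsilon$ (packaged as~\eqref{eq:cvzeta} in Lemma~\ref{lem:OU}), and controls the flow map exactly as you do via the Lipschitz estimates of Lemma~\ref{lem:flow} together with Gaussian exponential moment bounds. The only cosmetic difference is that the paper applies the two-argument comparison of Lemma~\ref{lem:flow} in one step rather than splitting through the intermediate point $\varphi(\beta(t),x_0^\epsilon)$.
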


Let us now focus on the behavior of $X_n^{\epsilon,\Delta t}$. Introduce the scheme defined by
\begin{equation}\label{eq:limitingscheme}
X_{n+1}^{0,\Delta t}=\Phi(\Delta \beta_n,X_n^{0,\Delta t}),
\end{equation}
for all $n\in\{0,\ldots,N-1\}$, with initial value $X_0^{0,\Delta t}=x_0^0=X^0(0)$, where we recall that one has $\Delta\beta_n=\beta(t_{n+1})-\beta(t_n)$. One has the following result.
\begin{propo}\label{propo:cvScheme}
Let Assumptions~\ref{ass:sigma},~\ref{ass:init} and~\ref{ass:integrator} be satisfied. For all $p\in[1,\infty)$, all $\Delta t=T/N\in(0,\Delta t_0)$ and all $n\in\{0,\ldots,N\}$, one has
\begin{equation}\label{eq:cvScheme}
\dd_p(X_n^{\epsilon,\Delta t},X_n^{0,\Delta t})\underset{\epsilon\to 0}\to 0.
\end{equation}
\end{propo}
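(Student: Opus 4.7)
The plan is to prove the claim by induction on $n$, using the Lipschitz estimate~\eqref{eq:ass_integrator-Lip} to propagate the error from step to step. The base case $n=0$ is immediate: $X_0^{\epsilon,\Delta t}=x_0^\epsilon$ and $X_0^{0,\Delta t}=x_0^0$ are deterministic, so $\dd_p(X_0^{\epsilon,\Delta t},X_0^{0,\Delta t})=\dd(x_0^\epsilon,x_0^0)\to 0$ by Assumption~\ref{ass:init}.

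The key preliminary is to compare the ``drift arguments'' of $\Phi$ in the two schemes. Solving the implicit Euler step for $m_{n+1}^{\epsilon,\Delta t}$ gives
\[
\frac{\Delta t\, m_{n+1}^{\epsilon,\Delta t}}{\epsilon}
=\frac{\Delta t\,\epsilon}{\epsilon^2+\Delta t}\,m_n^{\epsilon,\Delta t}
+\frac{\Delta t}{\epsilon^2+\Delta t}\,\Delta\beta_n,
\]
hence
\[
\Bigl|\frac{\Delta t\, m_{n+1}^{\epsilon,\Delta t}}{\epsilon}-\Delta\beta_n\Bigr|
\le \epsilon\,|m_n^{\epsilon,\Delta t}|+\frac{\epsilon^2}{\Delta t}\,|\Delta\beta_n|.
\]
Iterating the recursion for $m$ yields the closed form
\[
m_n^{\epsilon,\Delta t}
=\Bigl(\tfrac{\epsilon^2}{\epsilon^2+\Delta t}\Bigr)^{n}m_0^\epsilon
+\sum_{k=0}^{n-1}\Bigl(\tfrac{\epsilon^2}{\epsilon^2+\Delta t}\Bigr)^{n-k}\frac{\Delta\beta_k}{\epsilon},
\]
and since $\tfrac{\epsilon^2}{\epsilon^2+\Delta t}\le 1$ and $\tfrac{\epsilon}{\epsilon^2+\Delta t}\le \tfrac{1}{2\sqrt{\Delta t}}$, the pointwise bound
\[
|m_n^{\epsilon,\Delta t}|\le |m_0^\epsilon|+\frac{1}{2\sqrt{\Delta t}}\sum_{k=0}^{n-1}|\Delta\beta_k|
\]
holds. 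Thus, for fixed $\Delta t$ and $n$, $m_n^{\epsilon,\Delta t}$ has $L^q(\Omega)$ moments of every order bounded uniformly in $\epsilon\in(0,\epsilon_0)$, and consequently $\tfrac{\Delta t\, m_{n+1}^{\epsilon,\Delta t}}{\epsilon}-\Delta\beta_n\to 0$ in $L^q$ for every $q\in[1,\infty)$.

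For the inductive step I will apply~\eqref{eq:ass_integrator-Lip} with $t_1=\tfrac{\Delta t\, m_{n+1}^{\epsilon,\Delta t}}{\epsilon}$, $t_2=\Delta\beta_n$, $x_1=X_n^{\epsilon,\Delta t}$, $x_2=X_n^{0,\Delta t}$:
\[
\dd(X_{n+1}^{\epsilon,\Delta t},X_{n+1}^{0,\Delta t})
\le C\Bigl(\bigl|\tfrac{\Delta t\, m_{n+1}^{\epsilon,\Delta t}}{\epsilon}-\Delta\beta_n\bigr|
+\dd(X_n^{\epsilon,\Delta t},X_n^{0,\Delta t})\Bigr)
e^{C(|\tfrac{\Delta t\, m_{n+1}^{\epsilon,\Delta t}}{\epsilon}|+|\Delta\beta_n|)}.
\]
Taking the $L^p(\Omega)$ norm and applying Hölder's inequality with exponents $2p$ and $2p$, the right-hand side is bounded by
\[
C\,\bigl\|e^{C(|\tfrac{\Delta t\, m_{n+1}^{\epsilon,\Delta t}}{\epsilon}|+|\Delta\beta_n|)}\bigr\|_{2p}
\cdot\Bigl(\bigl\|\tfrac{\Delta t\, m_{n+1}^{\epsilon,\Delta t}}{\epsilon}-\Delta\beta_n\bigr\|_{2p}+\dd_{2p}(X_n^{\epsilon,\Delta t},X_n^{0,\Delta t})\Bigr).
\]
The exponential factor has a uniform-in-$\epsilon$ bound in every $L^r(\Omega)$, because $\tfrac{\Delta t\, m_{n+1}^{\epsilon,\Delta t}}{\epsilon}$ is a Gaussian random variable (an affine combination of the Gaussian increments $\Delta\beta_0,\ldots,\Delta\beta_n$) whose variance, by the explicit expression above, is bounded in $\epsilon$ for fixed $\Delta t$; exponential moments of Gaussians are therefore finite and uniformly bounded. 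Combined with the $L^{2p}$-convergence of $\tfrac{\Delta t\, m_{n+1}^{\epsilon,\Delta t}}{\epsilon}-\Delta\beta_n$ to zero and the induction hypothesis applied at exponent $2p$, this yields $\dd_p(X_{n+1}^{\epsilon,\Delta t},X_{n+1}^{0,\Delta t})\to 0$.

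To make the induction close over all $p\in[1,\infty)$ simultaneously (since each step costs a factor of $2$ in the exponent), I would state the induction hypothesis as: for every $q\in[1,\infty)$, $\dd_q(X_n^{\epsilon,\Delta t},X_n^{0,\Delta t})\to 0$ as $\epsilon\to 0$. The only delicate point is verifying the uniform-in-$\epsilon$ exponential moment bound on $\tfrac{\Delta t\, m_{n+1}^{\epsilon,\Delta t}}{\epsilon}$, which I expect to be the main technical obstacle; but the explicit Gaussian representation above reduces it to elementary computations. Everything else is a clean propagation through the one-step Lipschitz estimate.
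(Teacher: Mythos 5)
Your proof is correct and follows essentially the same route as the paper: induction on $n$ via the one-step Lipschitz estimate~\eqref{eq:ass_integrator-Lip}, H\"older's inequality with a doubling of the integrability exponent, uniform-in-$\epsilon$ exponential moment bounds for $\tfrac{\Delta t\,m_{n+1}^{\epsilon,\Delta t}}{\epsilon}$, and the $L^q$-convergence of $\tfrac{\Delta t\,m_{n+1}^{\epsilon,\Delta t}}{\epsilon}-\Delta\beta_n$ to zero. The only difference is that you re-derive the needed moment and convergence facts from the explicit Gaussian representation (obtaining bounds that are uniform in $\epsilon$ only for fixed $\Delta t$, which suffices here), whereas the paper invokes Lemma~\ref{lem:OUdiscrete}, which establishes the same bounds uniformly in $\Delta t$ as well.
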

We refer to Section~\ref{sec:proofs} for the proofs of Propositions~\ref{propo:cvSDE} and~\ref{propo:cvScheme}. Note that the condition~\eqref{eq:ass_integrator-order} from Assumption~\ref{ass:integrator} is not used to prove Proposition~\ref{propo:cvScheme}. When choosing different examples for the integrator $\Phi$, one recovers standard numerical schemes from the limiting scheme~\eqref{eq:limitingscheme} for the approximation of the limiting stochastic differential equation~\eqref{eq:limitingSDE}, see for instance~\cite[Part~V]{KloedenPlaten}.

\subsection{Uniform strong error estimates for the scheme~\eqref{eq:scheme}}\label{sec:results-main}

We are now in position to state the main result of this article.
\begin{theo}\label{theo:main}
Let Assumptions~\ref{ass:sigma},~\ref{ass:init} and~\ref{ass:integrator} be satisfied. For all $T\in(0,\infty)$ and $p\in[1,\infty)$, there exists $C_p(T)\in(0,\infty)$ such that for all $\Delta t\in(0,\Delta t_0)$, one has
\begin{equation}\label{eq:main}
\underset{\epsilon\in(0,\epsilon_0)}\sup~\dd_p\bigl(X_N^{\epsilon,\Delta t},X^\epsilon(T)\bigr)\le C_p(T)\Delta t^{\frac12}.
\end{equation}
\end{theo}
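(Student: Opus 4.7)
The plan is to introduce the auxiliary random point $\widetilde{X}_N := \varphi(S_N,x_0^\epsilon)$, where $S_N := \sum_{n=0}^{N-1} \tau_n$ with $\tau_n := \Delta t\,m_{n+1}^{\epsilon,\Delta t}/\epsilon$ the time argument fed to the integrator at step $n$, and to split the error via the triangle inequality
\[
\dd_p\bigl(X_N^{\epsilon,\Delta t},X^\epsilon(T)\bigr)\le \dd_p\bigl(X_N^{\epsilon,\Delta t},\widetilde{X}_N\bigr)+\dd_p\bigl(\widetilde{X}_N,X^\epsilon(T)\bigr),
\]
showing that each of the two pieces is $O(\Delta t^{1/2})$.

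The second piece is the easier one. Since $X^\epsilon(T)=\varphi(\zeta^\epsilon(T),x_0^\epsilon)$ by~\eqref{eq:SDE-solution} and $\partial_t\varphi=\sigma\circ\varphi$ is bounded on $\T^d$, one has $\dd(\widetilde{X}_N,X^\epsilon(T))\le \|\sigma\|_\infty|S_N-\zeta^\epsilon(T)|$. Summing the $m$-recursion in~\eqref{eq:scheme} telescopically on one hand, and integrating the Ornstein--Uhlenbeck SDE on $[0,T]$ on the other, both reduce to the clean identity
\[
S_N-\zeta^\epsilon(T)=\epsilon\bigl(m^\epsilon(T)-m_N^{\epsilon,\Delta t}\bigr),
\]
so the $\Delta t^{1/2}$ bound follows directly from Lemma~\ref{lem:cv_m}.

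For the first piece, I exploit the flow property~\eqref{eq:flow-varphi} of $\varphi$ to rewrite $\widetilde{X}_N$ as the iterated composition $\varphi(\tau_{N-1},\cdot)\circ\cdots\circ\varphi(\tau_0,x_0^\epsilon)$, which mirrors the scheme $X_N^{\epsilon,\Delta t}=\Phi(\tau_{N-1},\cdot)\circ\cdots\circ\Phi(\tau_0,x_0^\epsilon)$ step by step. The resulting error recursion has local discrepancy $R_n := \Phi(\tau_n,X_n^{\epsilon,\Delta t})-\varphi(\tau_n,X_n^{\epsilon,\Delta t})$, bounded by $|R_n|\le C|\tau_n|^3 e^{C|\tau_n|}$ via the order-$2$ hypothesis~\eqref{eq:ass_integrator-order}. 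The explicit expression $\tau_n=\frac{\Delta t\,\epsilon}{\Delta t+\epsilon^2}(m_n^{\epsilon,\Delta t}+\Delta\beta_n/\epsilon)$ combined with uniform $L^p$ bounds on $m_n^{\epsilon,\Delta t}$ yields $\|\tau_n\|_p\le C_p\Delta t^{1/2}$ uniformly in $\epsilon\in(0,\epsilon_0)$, whence $\|R_n\|_p\le C_p\Delta t^{3/2}$; summing $N=T/\Delta t$ such local contributions produces the expected global $O(\Delta t^{1/2})$, provided the propagation of local errors through the remaining steps is controlled.

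The main obstacle is precisely that propagation step. A naive bound using the per-step Lipschitz constant of $\Phi(\tau_k,\cdot)$ from~\eqref{eq:ass_integrator-Lip} multiplies to $\prod_k Ce^{C|\tau_k|}=e^{O(\sum_k|\tau_k|)}$, and $\sum_k|\tau_k|$ has $L^p$ moments of order $T\Delta t^{-1/2}$, which blow up. The cure is to transfer the propagation from iterated $\Phi$'s to iterated $\varphi$'s: the chain rule for the flow map telescopes products of Jacobians of $\varphi$ along the reference trajectory $\widetilde{X}_n=\varphi(S_n,x_0^\epsilon)$ into a single Jacobian $\partial_x\varphi(S_N-S_{n+1},\widetilde{X}_{n+1})$ whose norm is bounded by $e^{L|S_N-S_{n+1}|}$, and the partial sums $S_n=\beta(t_n)+O(\epsilon)$ have uniform $L^p$ exponential moments. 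The discrepancy between $\Phi$- and $\varphi$-propagation is exactly what Lemma~\ref{lem:integrator} controls, at a cross-term cost $O(|\tau|^2|\tau'|+|\tau||\tau'|^2)$ that stays within the $O(\Delta t^{3/2})$ per-step budget. Combining all estimates via H\"older's inequality and a discrete Gronwall argument delivers~\eqref{eq:main}.
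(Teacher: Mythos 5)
Your proposal is correct, and it follows a genuinely different decomposition from the paper's. The paper introduces the corrected process $Y_n^{\epsilon,\Delta t}=\Phi\bigl(\epsilon(m_n^{\epsilon,\Delta t}-m^\epsilon(t_n)),X_n^{\epsilon,\Delta t}\bigr)$ and telescopes $\dd_p(Y_N^{\epsilon,\Delta t},X^\epsilon(T))$ along the exact flow evaluated at the \emph{exact} increments $\zeta^\epsilon(t_{n+1})-\zeta^\epsilon(t_n)$; this forces a per-step reconciliation between those increments and the scheme's increments $\tau_n$, which is where Lemma~\ref{lem:integrator} (the $\delta\Phi$ bound) and a per-step use of Lemma~\ref{lem:cv_m} enter, producing the $1/n$ remainders. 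You instead compare $X_N^{\epsilon,\Delta t}$ with $\varphi(S_N,x_0^\epsilon)$, the exact flow run for the \emph{numerical} total time $S_N=\sum_n\tau_n$, so the two iterated compositions share the same time arguments and the entire continuous-versus-discrete mismatch collapses into the single scalar $S_N-\zeta^\epsilon(T)=\epsilon\bigl(m^\epsilon(T)-m_N^{\epsilon,\Delta t}\bigr)$, handled once by Lemma~\ref{lem:cv_m} and the boundedness of $\sigma$ (Lemma~\ref{lem:flow}). Your identity for $S_N-\zeta^\epsilon(T)$ checks out, and the first piece closes exactly as you indicate once the fan is set up with the hybrid points $A_n=\varphi(S_N-S_n,X_n^{\epsilon,\Delta t})$: by the flow property~\eqref{eq:flow-varphi} and Lemma~\ref{lem:flow},
\[
\dd(A_{n+1},A_n)\le e^{C|S_N-S_{n+1}|}\,\dd\bigl(\Phi(\tau_n,X_n^{\epsilon,\Delta t}),\varphi(\tau_n,X_n^{\epsilon,\Delta t})\bigr)\le Ce^{C|S_N-S_{n+1}|}|\tau_n|^3e^{C|\tau_n|},
\]
and H\"older's inequality together with~\eqref{eq:incrementsdiscrete},~\eqref{eq:expomoments-discrete} and the uniform exponential moments of $S_N-S_{n+1}=\beta(T)-\beta(t_{n+1})+\epsilon(m_{n+1}^{\epsilon,\Delta t}-m_N^{\epsilon,\Delta t})$ give a $C_p(T)\Delta t^{3/2}$ bound per step. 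For this particular system your route is the more economical of the two; the paper's pays the extra price of Lemma~\ref{lem:integrator}, but its bookkeeping is organized around the exact solution's increments, a structure that survives in situations where the scheme's time arguments do not telescope as cleanly as they do here.

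One small correction to your write-up: in the implementation above, Lemma~\ref{lem:integrator} is not needed at all, since the propagation is carried entirely by $\varphi$, which satisfies an exact flow property, and the per-step defect is purely the consistency error~\eqref{eq:ass_integrator-order}. Be careful not to read your phrase ``transfer the propagation from iterated $\Phi$'s to iterated $\varphi$'s'' as recombining $\Phi(\tau_{N-1},\cdot)\circ\cdots\circ\Phi(\tau_0,\cdot)$ into $\Phi(S_N,\cdot)$ by repeated use of Lemma~\ref{lem:integrator}: a single recombination $\delta\Phi(\tau_k,S_k,\cdot)$ already costs ${\rm O}(|\tau_k|S_k^2+\tau_k^2|S_k|)={\rm O}(\Delta t^{1/2})$ because $S_k={\rm O}(1)$, and summing $N$ such terms diverges. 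The fan with exact-flow propagation avoids this issue entirely, which is precisely the point of introducing $\widetilde{X}_N$ as a $\varphi$-image rather than a $\Phi$-image.
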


Before proceeding with the description of some consequences and the proof of Theorem~\ref{theo:main}, let us emphasize that the condition~\eqref{eq:ass_integrator-order} from Assumption~\ref{ass:integrator} is required for Theorem~\ref{theo:main} to hold. Indeed, Theorem~\ref{theo:main} does not hold when the integrator $\Phi$ is the standard explicit Euler integrator (if $\sigma$ is not constant). Recall that
\[
\Phi^{\rm E}(t,x)=x+t\sigma(x)
\]
for all $t\in\R$ and $x\in\T^d$. Using $\Phi=\Phi^{\rm E}$ in the definitions of the schemes~\eqref{eq:scheme} and~\eqref{eq:limitingscheme} gives the limiting scheme
\[
X_{n+1}^{0,\Delta t,{\rm E}}=X_{n}^{0,\Delta t,{\rm E}}+\Delta \beta_n\sigma(X_n^{0,\Delta t,{\rm E}})
\]
with initial value $X_0^{0,\Delta t,{\rm E}}=x_0^0$. This is the standard Euler--Maruyama scheme applied to the stochastic differential equation
\[
dX^{0,{\rm E}}(t)=\sigma(X^{0,{\rm E}}(t))d\beta(t)
\]
with initial value $X^{0,{\rm E}}(0)=x_0^0$, where the noise is interpreted in the It\^o sense. It is a well-known result that the standard Euler--Maruyama scheme applied in this setting is a method of strong order $1/2$: for all $p\in[1,\infty)$, there exists $C_p(T)\in(0,\infty)$ such that for all $\Delta t\in(0,\Delta t_0)$ one has
\[
\dd_p\bigl(X_N^{0,\Delta t,{\rm E}},X^{0,{\rm E}}(T)\bigr)\le C_p(T)\Delta t^{\frac12}
\]
In general, since $\sigma$ is not constant, one has $X^{0,{\rm E}}(T)\neq X^0(T)$, therefore
\[
\underset{\epsilon\in(0,\epsilon_0)}\sup~\dd_p\bigl(X_N^{\epsilon,\Delta t,{\rm E}},X^\epsilon(T)\bigr)
\]
does not converge to $0$ when $\Delta t\to 0$. If the condition~\eqref{eq:ass_integrator-order} from Assumption~\ref{ass:integrator} is not satisfied, one can only obtain strong error estimates
\[
\dd_p\bigl(X_N^{\epsilon,\Delta t,{\rm E}},X^\epsilon(T)\bigr)\le C_p(\epsilon,T)\Delta t^{\frac12}
\]
which are not uniform with respect to $\epsilon$: one has $\underset{\epsilon\in(0,\epsilon_0)}\sup~C_p(\epsilon,T)=\infty$.

The arguments above illustrate why the uniform strong error estimates~\eqref{eq:main} from Theorem~\ref{theo:main} are non trivial results. The proof given in Section~\ref{sec:proofmain} illustrates the role of the condition~\eqref{eq:ass_integrator-order} from Assumption~\ref{ass:integrator}. We refer to Section~\ref{sec:num} for numerical experiments which illustrate Theorem~\ref{theo:main} and the discussion above.

\subsection{Strong error estimates for the limiting scheme~\eqref{eq:limitingscheme}}\label{sec:results-lim}

As a corollary of Theorem~\ref{theo:main}, letting $\epsilon\to 0$ in the uniform strong error estimate~\eqref{eq:main}, one checks that the limiting scheme~\eqref{eq:limitingscheme} is an integrator of order at least $1/2$ for the limiting SDE~\eqref{eq:limitingSDE}.
\begin{cor}\label{cor:ms}
Let Assumptions~\ref{ass:sigma} and~\ref{ass:integrator} be satisfied. For all $T\in(0,\infty)$ and $p\in[1,\infty)$, there exists $C_p(T)\in(0,\infty)$ such that for all $\Delta t\in(0,\Delta t_0)$, one has
\begin{equation}\label{eq:cor-ms}
\dd_p\bigl(X_N^{0,\Delta t},X^0(T)\bigr)\le C_p(T)\Delta t^{\frac12}.
\end{equation}
\end{cor}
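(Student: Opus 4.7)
The plan is to obtain Corollary~\ref{cor:ms} by a triangle inequality argument that inserts the $\epsilon>0$ objects between the two quantities of interest and then passes to the limit $\epsilon\to 0$ at fixed $\Delta t$. More precisely, for any $\epsilon\in(0,\epsilon_0)$ and any fixed $\Delta t=T/N\in(0,\Delta t_0)$, I would write
\begin{equation*}
\dd_p\bigl(X_N^{0,\Delta t},X^0(T)\bigr)\le \dd_p\bigl(X_N^{0,\Delta t},X_N^{\epsilon,\Delta t}\bigr)+\dd_p\bigl(X_N^{\epsilon,\Delta t},X^\epsilon(T)\bigr)+\dd_p\bigl(X^\epsilon(T),X^0(T)\bigr).
\end{equation*}

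The key observation is that only the middle term on the right is controlled by $\Delta t$, and Theorem~\ref{theo:main} gives exactly $\dd_p(X_N^{\epsilon,\Delta t},X^\epsilon(T))\le C_p(T)\Delta t^{1/2}$ with a constant $C_p(T)$ which is \emph{independent} of $\epsilon$. This independence is precisely the content of the uniform estimate~\eqref{eq:main} and is the reason why taking $\epsilon\to 0$ on the right-hand side will not destroy the $\Delta t^{1/2}$ bound.

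For the two remaining terms, I would invoke the convergence results already established: Proposition~\ref{propo:cvScheme} ensures that $\dd_p(X_N^{0,\Delta t},X_N^{\epsilon,\Delta t})\to 0$ as $\epsilon\to 0$ at fixed $\Delta t$, while Proposition~\ref{propo:cvSDE} ensures that $\dd_p(X^\epsilon(T),X^0(T))\to 0$ as $\epsilon\to 0$ (recalling Assumption~\ref{ass:init} which provides $\dd(x_0^\epsilon,x_0^0)\to 0$). Passing to the limit $\epsilon\to 0$ in the triangle inequality above then yields
\begin{equation*}
\dd_p\bigl(X_N^{0,\Delta t},X^0(T)\bigr)\le C_p(T)\Delta t^{\frac12},
\end{equation*}
which is exactly~\eqref{eq:cor-ms}.

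Since the left-hand side does not depend on $\epsilon$, no additional uniformity argument is needed, and this is really a one-line consequence of Theorem~\ref{theo:main} combined with the two asymptotic results of Section~\ref{sec:results-epsilon}. There is no substantive obstacle: the only point to stress in the write-up is that the constant produced by Theorem~\ref{theo:main} does not depend on $\epsilon$, which is what allows the bound to survive the limit $\epsilon\to 0$. In particular, the slow rate of convergence in Proposition~\ref{propo:cvScheme} (which may be much worse than $\Delta t^{1/2}$) is harmless because $\Delta t$ is kept fixed during that limit.
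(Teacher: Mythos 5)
Your argument is exactly the one the paper intends: the text states that Corollary~\ref{cor:ms} is a straightforward consequence of Propositions~\ref{propo:cvSDE} and~\ref{propo:cvScheme} and of Theorem~\ref{theo:main}, and your triangle-inequality decomposition with the limit $\epsilon\to 0$ at fixed $\Delta t$ is precisely that argument, correctly emphasizing that the constant in~\eqref{eq:main} is independent of $\epsilon$. No issues.
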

The proof of Corollary~\ref{cor:ms} is a straightforward consequence of Propositions~\ref{propo:cvSDE} and~\ref{propo:cvScheme} and of Theorem~\ref{theo:main}, the details are omitted.

Note that Corollary~\ref{cor:ms} is not a new result and  can be proved directly by applying the fundamental theorem for mean-square convergence when $p=2$, see for instance~\cite[Theorem~1.1]{MilsteinTretyakov}. The details are omitted. Under additional regularity conditions on the mapping $\sigma$ and on the integrator $\Phi$, it is possible to improve the result and obtain strong order of convergence $1$ in the mean-square sense.
\begin{propo}\label{propo:orderlimitingscheme}
Assume that for all $x\in\T^d$ the mappings $\varphi(\cdot,x)$ and $\Phi(\cdot,x)$ are of class $\mathcal{C}^4$, and that there exists $C\in(0,\infty)$ such that for all $t\in\R$ one has
\begin{align*}
\underset{x\in \T^d}\sup~|\partial_t^3\varphi(t,x)|+\underset{x\in \T^d}\sup~|\partial_t^4\varphi(t,x)|+
\underset{x\in \T^d}\sup~|\partial_t^3\Phi(t,x)|+\underset{x\in \T^d}\sup~|\partial_t^4\Phi(t,x)|&\le e^{C|t|}.
\end{align*}
For all $T\in(0,\infty)$, there exists $C_2(T)\in(0,\infty)$ such that for all $\Delta t\in(0,\Delta t_0)$, one has the mean-square error estimate
\begin{equation}\label{eq:orderlimitingscheme}
\underset{0\le n\le N}\sup~\dd_2\bigl(X_n^{\Delta t},X(n\Delta t)\bigr)\le C_2(T)\Delta t
\end{equation}
for the limiting scheme~\eqref{eq:limitingscheme} applied to the limiting SDE~\eqref{eq:limitingSDE}.
\end{propo}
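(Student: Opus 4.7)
The plan is to verify the hypotheses of the fundamental theorem on mean-square convergence of one-step approximations (for instance~\cite[Theorem~1.1]{MilsteinTretyakov}) with sharpened one-step estimates. Since $X^0(t) = \varphi(\beta(t),x_0^0)$ by~\eqref{eq:limitingSDE-solution}, the flow property~\eqref{eq:flow-varphi} gives $X^0(t_{n+1}) = \varphi(\Delta\beta_n, X^0(t_n))$, so comparing this with the one-step map of~\eqref{eq:limitingscheme} reduces the analysis to controlling the local error $\rho_n(x) := \Phi(\Delta\beta_n,x) - \varphi(\Delta\beta_n,x)$ uniformly in $x\in\T^d$. I will show that $\|\rho_n(x)\|_2 \le C\Delta t^{3/2}$ and $\|\E[\rho_n(x)\mid \mathcal{F}_{t_n}]\|_2 \le C\Delta t^2$; with exponents $p_1 = 3/2$ and $p_2 = 2 = p_1 + 1/2$, the fundamental theorem then yields global mean-square order~$1$, which is precisely~\eqref{eq:orderlimitingscheme}.

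The core step is a Taylor expansion in $t$ of the smooth deviation $e(t,x) := \Phi(t,x)-\varphi(t,x)$ at $t=0$. Assumption~\ref{ass:integrator} provides the a priori bound $|e(t,x)|\le C|t|^3 e^{C|t|}$ together with $\mathcal{C}^3$ regularity, and comparing this bound with the Peano remainder of Taylor's formula forces the three initial coefficients to vanish identically:
\[
e(0,x) = \partial_t e(0,x) = \partial_t^2 e(0,x) = 0,\qquad x\in\T^d.
\]
Under the additional $\mathcal{C}^4$ assumption of Proposition~\ref{propo:orderlimitingscheme} and the uniform bounds on $\partial_t^4\varphi$ and $\partial_t^4\Phi$, Taylor's formula with integral remainder then yields
\[
e(t,x) = \frac{t^3}{6}\,\partial_t^3 e(0,x) + R(t,x),\qquad |R(t,x)| \le C|t|^4 e^{C|t|},
\]
uniformly in $x\in\T^d$. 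Setting $t=\Delta\beta_n$ and conditioning on $\mathcal{F}_{t_n}$ exploits the fact that $\Delta\beta_n$ is independent of $\mathcal{F}_{t_n}$ and, as a centred Gaussian, satisfies $\E[\Delta\beta_n^3]=0$: the cubic term disappears in conditional mean, while the remainder contributes at most $C\,\E[|\Delta\beta_n|^4 e^{C|\Delta\beta_n|}] \le C\Delta t^2$. This yields the conditional-mean bound, while the unconditional $L^2$ bound follows directly from $|\rho_n(x)|\le C|\Delta\beta_n|^3 e^{C|\Delta\beta_n|}$ together with $\E[|\Delta\beta_n|^6]= 15\,\Delta t^3$.

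With these two one-step estimates in hand, the fundamental theorem of mean-square convergence produces~\eqref{eq:orderlimitingscheme} directly; if one prefers a self-contained derivation, the same conclusion is obtained by telescoping along exact trajectories starting from $X_k^{0,\Delta t}$ at time $t_k$, in which the sum of first-order contributions in $\rho_k$ forms a discrete martingale with quadratic variation $O(N\Delta t^3)=O(\Delta t^2)$ (thanks to the conditional-mean vanishing) and the higher-order terms sum to $O(\Delta t)$ thanks to Lipschitz bounds on $\partial_x\varphi$ coming from Assumption~\ref{ass:sigma}. The main obstacle is the clean extraction of the vanishing Taylor coefficients and the use of the $\mathcal{C}^4$ hypothesis to isolate the explicit cubic term whose conditional mean is zero: without this structural cancellation one recovers only the generic strong order $1/2$ of Corollary~\ref{cor:ms}. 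A minor technical point is that $\varphi$ and $\Phi$ are $\T^d$-valued; the Taylor expansion is carried out componentwise after lifting to $\R^d$, which is unproblematic thanks to~\eqref{eq:ass_integrator-derivees} and the analogous bounds assumed on $\partial_t^3,\partial_t^4$ of $\varphi$ and $\Phi$.
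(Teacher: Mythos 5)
Your proposal is correct and follows essentially the same route as the paper: both verify the hypotheses of the fundamental theorem for mean-square convergence of Milstein--Tretyakov, obtaining the one-step $L^2$ bound of order $\Delta t^{3/2}$ directly from~\eqref{eq:ass_integrator-order} and the one-step mean bound of order $\Delta t^2$ via a Taylor expansion of $\Phi(\cdot,x)-\varphi(\cdot,x)$ in which the first three coefficients vanish and the cubic term is killed by $\E[\gamma^3]=0$. The only difference is that you spell out the Taylor-expansion argument (and the extraction of the vanishing coefficients from the a priori bound) in more detail than the paper, which merely sketches it.
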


\begin{proof}
In order to apply the fundamental theorem for mean-square convergence, see~\cite[Theorem~1.1]{MilsteinTretyakov}, it suffices to check that there exists $C\in(0,\infty)$ such that for all $x\in\T^d$ and all $\Delta t\in(0,\Delta t_0)$, one has
\begin{align*}
\dd_2\bigl(\varphi(\sqrt{\Delta t}\gamma,x),\Phi(\sqrt{\Delta t}\gamma,x)\bigr)&\le C\Delta t^{\frac32}\\
\dd\bigl(\E[\varphi(\sqrt{\Delta t}\gamma,x)],\E[\Phi(\sqrt{\Delta t}\gamma,x)]\bigr)&\le C\Delta t^2,
\end{align*}
where $\gamma\sim\mathcal{N}(0,1)$ is a standard real-valued Gaussian random variable.

The first claim is a straightforward consequence of the inequality~\eqref{eq:ass_integrator-order} from Assumption~\ref{ass:integrator}, using the fact that $\E[|\gamma|^3 e^{q|\gamma|}]<\infty$ for any positive real number $q\in[0,\infty)$.

The second claim follows from a Taylor-expansion argument (which requires the additional regularity conditions stated in Proposition~\ref{propo:orderlimitingscheme}), using the identity $\E[\gamma^3]=0$ and the fact that $\E[|\gamma|^4 e^{q|\gamma|}]<\infty$ for any positive real number $q\in[0,\infty)$.
\end{proof}

Proposition~\ref{propo:orderlimitingscheme} suggest that the order $1/2$ obtained in Theorem~\ref{theo:main} may not be optimal, under appropriate regularity conditions. Whether one can replace order $1/2$ by order $1$ in the uniform strong error estimates of Theorem~\ref{theo:main} is a question left open for future work.

\subsection{Asymptotic preserving property}\label{sec:results-AP}

As a consequence of Theorem~\ref{theo:main}, one also obtains that the numerical scheme~\eqref{eq:SDE} is asymptotic preserving.
\begin{propo}\label{propo:AP}
For all $p\in[1,\infty)$ and $T\in(0,\infty)$, one has
\begin{equation}\label{eq:propoAP}
\underset{\epsilon\to 0}\lim~\underset{\Delta t\to 0}\lim~\dd_p(X_N^{\epsilon,\Delta t},X^\epsilon(T))=\underset{\Delta t\to 0}\lim~\underset{\epsilon\to 0}\lim~\dd_p(X_N^{\epsilon,\Delta t},X^\epsilon(T))=0.
\end{equation}
\end{propo}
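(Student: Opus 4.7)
The plan is to bound $\dd_p(X_N^{\epsilon,\Delta t}, X^\epsilon(T))$ via the triangle inequality after inserting the intermediate objects $X_N^{0,\Delta t}$ and $X^0(T)$, and then to invoke the results already established earlier in the paper: Theorem~\ref{theo:main}, Propositions~\ref{propo:cvSDE} and~\ref{propo:cvScheme}, and Corollary~\ref{cor:ms}. Each of these handles one leg of the commutative diagram from the introduction, so the proof of~\eqref{eq:propoAP} reduces to bookkeeping.

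For the first iterated limit, I would simply note that Theorem~\ref{theo:main} gives, for every $\epsilon\in(0,\epsilon_0)$, the bound $\dd_p(X_N^{\epsilon,\Delta t}, X^\epsilon(T))\le C_p(T)\Delta t^{1/2}$. Thus $\lim_{\Delta t\to 0}\dd_p(X_N^{\epsilon,\Delta t}, X^\epsilon(T))=0$ for each fixed $\epsilon$, and the outer limit in $\epsilon$ is trivial.

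For the second iterated limit, I would fix $\Delta t$ and use the reverse triangle inequality, which holds for $\dd_p$ by Minkowski:
\[
\bigl|\dd_p(X_N^{\epsilon,\Delta t}, X^\epsilon(T))-\dd_p(X_N^{0,\Delta t}, X^0(T))\bigr|\le \dd_p(X_N^{\epsilon,\Delta t}, X_N^{0,\Delta t})+\dd_p(X^\epsilon(T), X^0(T)).
\]
Proposition~\ref{propo:cvScheme} drives the first right-hand term to $0$ as $\epsilon\to 0$ (for $\Delta t$ fixed), and Proposition~\ref{propo:cvSDE} does the same for the second, so $\lim_{\epsilon\to 0}\dd_p(X_N^{\epsilon,\Delta t}, X^\epsilon(T))=\dd_p(X_N^{0,\Delta t}, X^0(T))$. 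Corollary~\ref{cor:ms} then bounds this limit by $C_p(T)\Delta t^{1/2}$, which vanishes as $\Delta t\to 0$.

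No substantial obstacle is hidden in this argument: all the analytic content lives upstream, in Theorem~\ref{theo:main} (the main uniform strong error estimate) and in Proposition~\ref{propo:cvScheme} (convergence of the scheme as $\epsilon\to 0$ for $\Delta t$ fixed). Once these are in hand, Proposition~\ref{propo:AP} is a triangle-inequality assembly requiring no new estimate on either the scheme or the SDE.
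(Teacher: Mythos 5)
Your proof is correct and follows essentially the same route as the paper: the first iterated limit is handled by the uniform bound of Theorem~\ref{theo:main}, and the second by combining Propositions~\ref{propo:cvSDE} and~\ref{propo:cvScheme} (to pass to the limit $\epsilon\to 0$ at fixed $\Delta t$) with Corollary~\ref{cor:ms}. The only difference is that you make explicit the reverse triangle inequality for $\dd_p$ that the paper leaves implicit, which is a harmless and correct elaboration.
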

Note that the result differs from the ones considered in the recent article~\cite{BR}, where the asymptotic preserving property is understood in the sense of convergence in distribution. As already explained in~\cite{BR} and using the arguments given above in Section~\ref{sec:results-main}, Proposition~\ref{propo:AP} does not hold when the mapping $\Phi$ is the standard explicit Euler integrator, again the role of the condition~\ref{eq:ass_integrator-order} from Assumption~\ref{ass:integrator} is required to have Proposition~\ref{propo:AP}. Note that in~\cite{BR} only the Heun integrator was considered.

\begin{proof}
On the one hand, Theorem~\ref{theo:main} implies
\[
\underset{\epsilon\to 0}\lim~\underset{\Delta t\to 0}\lim~\dd_p(X_N^{\epsilon,\Delta t},X^\epsilon(T))=\underset{\epsilon\to 0}\lim~0=0.
\]
On the other hand, applying Proposition~\ref{propo:cvSDE} and Proposition~\ref{propo:cvScheme} in the first step, and Corollary~\ref{cor:ms} in the second step, one obtains
\[
\underset{\Delta t\to 0}\lim~\underset{\epsilon\to 0}\lim~\dd_p(X_N^{\epsilon,\Delta t},X^\epsilon(T))=\underset{\Delta t\to 0}\lim~\dd_p(X_N^{0,\Delta t},X^0(T))=0.
\]
The proof of Proposition~\ref{propo:AP} is thus completed.
\end{proof}

\section{Auxiliary results}\label{sec:aux}

The objective of this section is to state and prove some auxiliary results which are used to prove Theorem~\ref{theo:main} in Section~\ref{sec:proofmain}. Let us emphasize that the most important results are Lemma~\ref{lem:integrator}, which is a consequence of the condition~\eqref{eq:ass_integrator-order} from Assumption~\ref{ass:integrator}, and Lemma~\ref{lem:cv_m}. The other auxiliary results are also needed but play less important roles in the analysis.

\subsection{Properties of the flow map and of the integrator}\label{sec:aux-flowintegrator}

Lemma~\ref{lem:flow} below is a standard result in the analysis of ordinary differential equations. Its proof is given for completeness. It is worth comparing the result of Lemma~\ref{lem:flow} for the flow map $\varphi$ with the conditions imposed in Assumption~\ref{ass:integrator} for the integrator $\Phi$. 
\begin{lemma}\label{lem:flow}
There exists $C\in(0,\infty)$ such that for all $x_1,x_2\in \T^d$ and $t_1,t_2\in\R$, one has
\[
\dd\bigl(\varphi(t_1,x_1),\varphi(t_2,x_2)\bigr)\le C\bigl(|t_2-t_1|+e^{C(|t_1|+|t_2|)}|x_2-x_1|\bigr).
\]
\end{lemma}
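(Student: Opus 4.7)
The plan is a standard ODE argument: a triangle inequality splits the bound into a ``different times, same starting point'' piece (controlled by boundedness of $\sigma$) and a ``same time, different starting points'' piece (controlled by Gronwall's inequality, exploiting that $\sigma$ is Lipschitz).

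First I would write
\[
\dd\bigl(\varphi(t_1,x_1),\varphi(t_2,x_2)\bigr)\le \dd\bigl(\varphi(t_1,x_1),\varphi(t_2,x_1)\bigr)+\dd\bigl(\varphi(t_2,x_1),\varphi(t_2,x_2)\bigr).
\]
For the first summand, the integral form of the ODE~\eqref{eq:ODE} gives $\varphi(t_2,x_1)-\varphi(t_1,x_1)=\int_{t_1}^{t_2}\sigma(\varphi(s,x_1))\dd s$, and since $\sigma$ is continuous on the compact torus $\T^d$ (Assumption~\ref{ass:sigma}), it is bounded, so this term is at most $\|\sigma\|_\infty\,|t_2-t_1|$.

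For the second summand, I would apply Gronwall's inequality to the scalar function $u(t)=\dd(\varphi(t,x_1),\varphi(t,x_2))$. Since $\sigma$ is $\mathcal{C}^1$ on the compact $\T^d$ it is $L$-Lipschitz for some $L$, and the standard estimate $|u'(t)|\le L\,u(t)$ (read off from the ODE, using the Stratonovich/Lebesgue derivative on the torus metric) yields $u(t)\le e^{L|t|}u(0)$. Care must be taken because, as emphasized before the statement, the flow is defined on all of $\R\times\T^d$, so negative $t$ values are allowed; running the argument on $[t,0]$ or $[0,t]$ according to the sign of $t$ gives the symmetric bound $e^{L|t|}$. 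This produces
\[
\dd\bigl(\varphi(t_2,x_1),\varphi(t_2,x_2)\bigr)\le e^{L|t_2|}\dd(x_1,x_2).
\]

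Combining the two bounds with $C=\max(\|\sigma\|_\infty,L)$ gives the stated inequality; the looser exponential $e^{C(|t_1|+|t_2|)}$ in the right-hand side is simply a symmetric majorant of $e^{L|t_2|}$ and costs nothing. There is no real obstacle to speak of: the argument is entirely routine once one recalls that $\sigma$ being $\mathcal{C}^3$ on the compact torus automatically yields the boundedness and Lipschitz bounds needed; the only mild point of attention is treating $t\in\R$ rather than $t\ge 0$, which forces the absolute values in the exponent.
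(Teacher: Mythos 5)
Your proposal is correct and follows essentially the same route as the paper's proof: a triangle-inequality split into a time-increment term (bounded via the integral form of the ODE and boundedness of $\sigma$ on the compact torus) and a spatial term handled by Gronwall's lemma using the Lipschitz constant of $\sigma$, with the sign of $t$ treated separately to get $e^{C|t|}$. No gaps to report.
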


\begin{proof}[Proof of Lemma~\ref{lem:flow}]
First, owing to~\eqref{eq:ODEvarphi}, if $x\in\T^d$ and $t_1,t_2\in\R$, one has the equality
\[
\varphi(t_2,x)-\varphi(t_1,x)=\int_{t_1}^{t_2}\sigma(\varphi(t,x))dt
\]
and since $\sigma$ is bounded by Assumption~\ref{ass:sigma}, one obtains the inequality
\[
\dd\bigl(\varphi(t_1,x),\varphi(t_2,x)\bigr)\le C|t_2-t_1|.
\]
Second, if $x_1,x_2\in\T^d$, then for all $t\ge 0$ let $\eta(t;x_1,x_2)=\dd\bigl(\varphi(t,x_1),\varphi(t,x_2)\bigr)$. One has
\[
\eta(t;x_1,x_2)\le\int_0^{t}\underset{x\in\T^d}\sup~\|\sigma'(x)\|\eta(s;x_1,x_2)ds\le C\int_0^{t}\eta(s;x_1,x_2)ds,
\]
since $\sigma$ is of class $\mathcal{C}^1$ with bounded derivative by Assumption~\ref{ass:sigma}. Using the identity $\eta(0;x_1,x_2)=\dd(x_1,x_2)$, and applying Gronwall's lemma, one obtains for all $t\ge 0$ the inequality
\[
\dd\bigl(\varphi(t,x_1),\varphi(t,x_2)\bigr)=\eta(t;x_1,x_2)\le e^{Ct}\dd(x_1,x_2).
\]
Similarly, when $t\le 0$, one obtains
\[
\dd\bigl(\varphi(t,x_1),\varphi(t,x_2)\bigr)\le e^{C|t|}\dd(x_1,x_2).
\]

Combining the upper bounds and using the triangle inequality then concludes the proof of Lemma~\ref{lem:flow}.
\end{proof}

Before stating the next auxiliary result, observe that the following identities hold, as a consequence of the condition~\eqref{eq:ass_integrator-order} from Assumption~\ref{ass:integrator}: for all $x\in\R^d$,
\begin{equation}\label{eq:deriveesPhi}
\begin{aligned}
\partial_t\Phi(t=0,x)&=\partial_t\varphi(t=0,x)=\sigma(x)\\
\partial_t^2\Phi(t=0,x)&=\partial_t^2\varphi(t=0,x)=\sigma'(x)\sigma(x)\\
\partial_t\partial_x\Phi(t=0,x)&=\partial_t\partial_x\varphi(t=0,x)=\sigma'(x).
\end{aligned}
\end{equation}
The first and the third equalities in~\eqref{eq:deriveesPhi} only require the integrator to be of order at least $1$, however the assumption~\eqref{eq:ass_integrator-order} that the order is at least $2$ is required to obtain the second equality, which plays a crucial role in the proof of Lemma~\ref{lem:integrator} below.

Let us define the auxiliary function $\delta\Phi$ as follows: for all $t_1,t_2\in\R$ and $x\in\T^d$, set
\begin{equation}\label{eq:deltaPhi}
\delta\Phi(t_1,t_2,x)=\Phi(t_1+t_2,x)-\Phi(t_1,\Phi(t_2,x)).
\end{equation}
In general, $\delta\Phi(t_1,t_2,x)\neq 0$ since the integrator does not satisfy a flow property similar to~\eqref{eq:flow-varphi} for the flow map -- which can be written as the property $\delta\varphi(t_1,t_2,x):=\varphi(t_1+t_2,x)-\varphi(t_1,\Phi(t_2,x))=0$ for all $t_1,t_2\in\R$ and $x\in\T^d$. Lemma~\ref{lem:integrator} below gives an upper bound for $\delta\Phi(t_1,t_2,x)$ which is crucial for the proof of Theorem~\ref{theo:main}.
\begin{lemma}\label{lem:integrator}
There exists $C\in(0,\infty)$ such that for all $t_1,t_2\in\R$ and $x\in \T^d$, one has
\begin{equation}\label{eq:lem-integrator}
\dd\bigl(\Phi(t_1+t_2,x),\Phi(t_1,\Phi(t_2,x)\bigr)\le C\bigl(|t_1|t_2^2+t_1^2|t_2|\bigr)e^{C(|t_1|+|t_2|)}.
\end{equation}
\end{lemma}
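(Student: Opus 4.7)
The plan is to exploit the fact that $\Phi$ is at least a second-order integrator in order to show that $\delta\Phi$ not only vanishes on both coordinate axes $\{t_1=0\}$ and $\{t_2=0\}$, but also that the mixed partial $\partial_{t_1}\partial_{t_2}\delta\Phi$ vanishes at the origin. This triple vanishing is exactly what produces the factor $t_1 t_2(|t_1|+|t_2|)$ in the upper bound.

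First I would record the identity $\Phi(0,x)=x$ (it follows from \eqref{eq:ass_integrator-order} at $t=0$ together with $\varphi(0,x)=x$), which gives immediately
\[
\delta\Phi(0,t_2,x)=\Phi(t_2,x)-\Phi(0,\Phi(t_2,x))=0,\qquad \delta\Phi(t_1,0,x)=\Phi(t_1,x)-\Phi(t_1,\Phi(0,x))=0.
\]
In particular $\partial_{t_2}\delta\Phi(0,v,x)=0$ for every $v$. Next I would differentiate once in $t_2$ and once in $t_1$, obtaining
\[
\partial_{t_1}\partial_{t_2}\delta\Phi(t_1,t_2,x)=\partial_t^2\Phi(t_1+t_2,x)-\partial_t\partial_x\Phi(t_1,\Phi(t_2,x))\,\partial_t\Phi(t_2,x),
\]
and evaluate at $t_1=t_2=0$. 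Using \eqref{eq:deriveesPhi}, namely $\partial_t^2\Phi(0,x)=\sigma'(x)\sigma(x)$, $\partial_t\partial_x\Phi(0,x)=\sigma'(x)$ and $\partial_t\Phi(0,x)=\sigma(x)$, the two terms cancel, so $\partial_{t_1}\partial_{t_2}\delta\Phi(0,0,x)=0$. This is the step where the order-two assumption is truly needed: for a merely first-order integrator the second equation in \eqref{eq:deriveesPhi} fails and the cancellation disappears.

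Once these vanishing properties are in hand, I would apply the fundamental theorem of calculus twice in $(t_1,t_2)$ to write
\[
\delta\Phi(t_1,t_2,x)=\int_0^{t_2}\!\!\int_0^{t_1}\partial_{t_1}\partial_{t_2}\delta\Phi(u,v,x)\,\dd u\,\dd v,
\]
and then, since the integrand vanishes at $(0,0)$, use a radial Taylor expansion
\[
\partial_{t_1}\partial_{t_2}\delta\Phi(u,v,x)=\int_0^1\bigl(u\,\partial_{t_1}^2\partial_{t_2}\delta\Phi(su,sv,x)+v\,\partial_{t_1}\partial_{t_2}^2\delta\Phi(su,sv,x)\bigr)\dd s.
\]
Combined integration then yields
\[
\dd\bigl(\Phi(t_1+t_2,x),\Phi(t_1,\Phi(t_2,x))\bigr)\le M(t_1,t_2,x)\int_0^{|t_2|}\!\!\int_0^{|t_1|}(u+v)\,\dd u\,\dd v,
\]
where $M(t_1,t_2,x)$ is an upper bound on $|\partial_{t_1}^2\partial_{t_2}\delta\Phi|$ and $|\partial_{t_1}\partial_{t_2}^2\delta\Phi|$ on the segment from $(0,0)$ to $(t_1,t_2)$. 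The inner double integral is exactly $\frac12(t_1^2|t_2|+|t_1|t_2^2)$, which is the announced bound.

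The remaining technical step, which I expect to be the slightly tedious part, is to verify that $M(t_1,t_2,x)\le Ce^{C(|t_1|+|t_2|)}$. Expanding $\partial_{t_1}^2\partial_{t_2}\delta\Phi$ and $\partial_{t_1}\partial_{t_2}^2\delta\Phi$ by the chain rule produces a finite number of terms of the form $\partial_t^i\partial_x^j\Phi(t_1,\Phi(t_2,x))\cdot P(\partial_t^{\ell}\Phi(t_2,x))$, where in each summand $i+j\le 3$ for the outer factor and the $\ell$'s that appear are at most $3$ for the inner factor. Each such factor is bounded by $Ce^{C|t_1|}$ or $Ce^{C|t_2|}$ through \eqref{eq:ass_integrator-derivees}, and the number of terms is bounded independently of everything, which yields the desired exponential upper bound on $M$ and closes the proof.
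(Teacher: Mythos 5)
Your proposal is correct and follows essentially the same route as the paper's proof: both rest on the vanishing of $\delta\Phi$ on the two axes, the cancellation $\partial_{t_1}\partial_{t_2}\delta\Phi(0,0,x)=0$ obtained from~\eqref{eq:deriveesPhi} (which is where the order-two condition~\eqref{eq:ass_integrator-order} enters), and the control of the third-order derivatives via~\eqref{eq:ass_integrator-derivees}. Your bookkeeping (a double integral followed by a radial Taylor expansion of the mixed partial) differs only cosmetically from the paper's sequential integrations in $t_1$ and then $t_2$, and yields the same bound.
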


\begin{proof}[Proof of Lemma~\ref{lem:integrator}]
Owing to the regularity conditions on $\Phi$ from Lemma~\ref{ass:integrator}, the mapping $\delta\Phi:\R^2\times\T^d\to \R^d$ is of class $\mathcal{C}^3$, and for all $t_1,t_2\in\R$ and $x\in\T^d$, one has
\begin{align*}
\partial_{t_1}\partial_{t_2}\delta\Phi(t_1,t_2,x)&=\partial_{t_1}\partial_{t_2}\bigl(\Phi(t_1+t_2,x)-\Phi(t_1,\Phi(t_2,x))\bigr)\\
&=\partial_{t_1}\bigl(\partial_t\Phi(t_1+t_2,x)-\partial_x\Phi(t_1,\Phi(t_2,x))\partial_t\Phi(t_2,x)\bigr)\\
&=\partial_t^2\Phi(t_1+t_2,x)-\partial_t\partial_x\Phi(t_1,\Phi(t_2,x))\partial_t\Phi(t_2,x).
\end{align*}
As a consequence of the three identities~\eqref{eq:deriveesPhi}, for all $x\in\T^d$, one has
\[
\partial_{t_1}\partial_{t_2}\delta\Phi(t_1=0,t_2=0,x)=0.
\]
In addition, as a consequence of the condition~\eqref{eq:ass_integrator-derivees} (Assumption~\ref{ass:integrator}), one has upper bounds
\[
\underset{x\in\T^d}\sup~|\partial_{t_1}^2\partial_{t_2}\delta\Phi(t_1,t_2,x)|+\underset{x\in\T^d}\sup~|\partial_{t_1}\partial_{t_2}^2e(t_1,t_2;x)|\le Ce^{C|t_1|+C|t_2|}.
\]
for all $t_1,t_2\in\R$. Therefore one obtains the inequality
\[
\underset{x\in\T^d}\sup~|\partial_{t_1}\partial_{t_2}\delta\Phi(t_1,t_2,x)|\le Ce^{C|t_1|+C|t_2|}(|t_1|+|t_2|)
\]
for all $t_1,t_2\in\R$.

Note also that, for all $x\in\T^d$, one has
\[
\delta\Phi(0,t_2,x)=\partial_{t_2}\delta\Phi(0,t_2,x)=0
\]
for all $t_2\in\R$ and
\[
\delta\Phi(t_1,0,x)=0
\]
for all $t_1\in\R$.

First, integrating with respect to the $t_1$ variable gives for all $t_1,t_2\in\R$
\[
|\partial_{t_2}\delta\Phi(t_1,t_2,x)|\le \int_{0}^{|t_1|}|\partial_{t_1}\partial_{t_2}\delta\Phi(t,t_2,x)|dt\le Ce^{C|t_1|+C|t_2|}(|t_1|+|t_2|)|t_1|.
\]
Second, integrating with respect to the $t_2$ variable gives for all $t_1,t_2\in\R$
\[
|\delta\Phi(t_1,t_2,x)|\le \int_{0}^{|t_2|}|\partial_{t_2}\delta\Phi(t_1,t,x)| dt\le Ce^{C|t_1|+C|t_2|}(|t_1|+|t_2|)|t_1||t_2|.
\]
Using the identity $(|t_1|+|t_2|)|t_1||t_2|=\bigl(|t_1|t_2^2+t_1^2|t_2|\bigr)$ then concludes the proof of Lemma~\ref{lem:integrator}.
\end{proof}

\begin{rem}
For the standard explicit Euler scheme, such that $\Phi^{\rm E}(t,x)=x+t\sigma(x)$, one has
\begin{align*}
\delta\Phi^{\rm E}(t_1,t_2,x)&=x+(t_1+t_2)\sigma(x)-\bigl(\Phi^{\rm E}(t_2,x)+t_1\sigma(\Phi^{\rm E}(t_2,x))\bigr)\\
&=t_1\bigl(\sigma(x)-\sigma(x+t_2\sigma(x))\bigr)
\end{align*}
and one only obtains the inequality
\[
\dd\bigl(\Phi^{\rm E}(t_1+t_2,x),\Phi^{\rm E}(t_1,\Phi^{\rm E}(t_2,x)\bigr)\le C|t_1||t_2|.
\]
The comparison of this inequality with~\eqref{eq:lem-integrator} illustrates why using a second-order integrator (satisfying~\eqref{eq:ass_integrator-order} from Assumption~\ref{ass:integrator}) is crucial in the proof of uniform strong error estimates.
\end{rem}

\subsection{Stong error estimate for the approximation of the Ornstein--Uhlenbeck component}\label{sec:aux-errorOU}

Let us provide a useful strong error estimate for the approximation of $m^\epsilon(t_n)$ by $m_n^{\epsilon,\Delta t}$. 

\begin{lemma}\label{lem:cv_m}
For all $p\in[1,\infty)$, there exists $C_p\in(0,\infty)$ such that for all $\epsilon\in(0,\epsilon_0)$, all $\Delta t\in(0,\Delta t_0)$ and all $n\in\N$, one has
\begin{equation}\label{eq:cv_m}
\|m_n^{\epsilon,\Delta t}-m^\epsilon(t_n)\|_p\le C_p\bigl(\frac{\sqrt{\Delta t}}{\epsilon}+\frac{1}{n}\bigr).
\end{equation}
\end{lemma}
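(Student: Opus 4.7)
The plan is to exploit the fact that both $m^\epsilon(t)$ and $m_n^{\epsilon,\Delta t}$ solve explicit linear equations with closed-form solutions, and to bound the resulting difference directly.

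First I would write $\alpha := \Delta t/\epsilon^2$ and $\rho := (1+\alpha)^{-1}$, noting the explicit form $m_{n+1}^{\epsilon,\Delta t} = \rho\, m_n^{\epsilon,\Delta t} + \rho\,\Delta\beta_n/\epsilon$. Solving this linear recursion and comparing with~\eqref{eq:mepsilon}, I obtain the decomposition
\[
m_n^{\epsilon,\Delta t} - m^\epsilon(t_n) = \bigl(\rho^n - e^{-n\alpha}\bigr)\,m_0^\epsilon + D_n,
\]
where
\[
D_n = \frac{1}{\epsilon}\sum_{k=0}^{n-1}\int_{t_k}^{t_{k+1}}\bigl(\rho^{n-k} - e^{-(t_n-s)/\epsilon^2}\bigr)\,d\beta(s)
\]
is a centered Gaussian random variable. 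Since $D_n$ is Gaussian, $\|D_n\|_p \leq C_p\|D_n\|_2$, so the problem reduces to bounding the deterministic coefficient $|\rho^n - e^{-n\alpha}|$ pointwise and $\|D_n\|_2$ explicitly.

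For the deterministic coefficient, the starting point is the elementary inequality $1+\alpha \leq e^\alpha$, which gives $\rho \geq e^{-\alpha}$ and hence $\rho^n - e^{-n\alpha} \geq 0$. Writing $\gamma := \alpha - \log(1+\alpha) \in [0, \alpha^2/2]$, a mean-value argument gives $|\rho^n - e^{-n\alpha}| = e^{-n\alpha}(e^{n\gamma}-1) \leq n\gamma\,\rho^n \leq \tfrac{n\alpha^2}{2}\rho^n$. Combined with Bernoulli's inequality $\rho^n \leq (1+n\alpha)^{-1}$ this yields the uniform bound $|\rho^n - e^{-n\alpha}| \leq \alpha/2$, while trivially $|\rho^n - e^{-n\alpha}| \leq 2\rho^n \leq 2/(1+n\alpha)$. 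A short case analysis (separating $\alpha \leq 4$, where $\alpha/2 \leq \sqrt\alpha$, from $\alpha > 4$, where $2/(1+n\alpha) \leq 1/n$) then delivers $|\rho^n - e^{-n\alpha}| \leq C\bigl(\sqrt\alpha + 1/n\bigr) = C\bigl(\sqrt{\Delta t}/\epsilon + 1/n\bigr)$.

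For the Gaussian term, It\^o isometry followed by the change of variables $v = (t_n-s)/\epsilon^2$ gives $\|D_n\|_2^2 = \sum_{j=1}^n\int_{(j-1)\alpha}^{j\alpha}(\rho^j - e^{-v})^2\,dv$. Splitting $\rho^j - e^{-v} = (\rho^j - e^{-j\alpha}) + (e^{-j\alpha} - e^{-v})$ and applying the bound $|\rho^j - e^{-j\alpha}| \leq \tfrac{j\alpha^2}{2}\rho^j$ together with $|e^{-j\alpha} - e^{-v}| \leq (j\alpha - v)\,e^{-v}$, and then summing the resulting geometric series in $\rho^2$ and $e^{-2\alpha}$, one obtains $\|D_n\|_2 \leq C\min(\alpha,1) \leq C\sqrt\alpha$ uniformly in $n$. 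Combining the two pieces concludes the proof.

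The main obstacle will be to pin down the deterministic term uniformly: the required envelope $\sqrt\alpha + 1/n$ has to interpolate between two qualitatively different regimes, the one in which $\alpha$ is small and $|\rho^n - e^{-n\alpha}|$ inherits the $O(\alpha^2)$ local discrepancy between the implicit-Euler amplification factor $\rho$ and its continuous counterpart $e^{-\alpha}$, and the one in which $\alpha$ is large and both $\rho^n$ and $e^{-n\alpha}$ are already small because of geometric decay, so that the smallness comes from the number of steps rather than from per-step consistency. Neither a pure Taylor estimate in $\alpha$ nor a pure geometric-decay estimate in $n$ would suffice; one really needs to combine both pointwise bounds and perform the short case analysis described above.
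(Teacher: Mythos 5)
Your proposal is correct and follows essentially the same route as the paper: closed-form expressions for both the Ornstein--Uhlenbeck process and its discretization, reduction to $p=2$ by Gaussianity, It\^o's isometry, and a splitting of the discrepancy into the initial-value coefficient, the mismatch between $(1+\alpha)^{-j}$ and $e^{-j\alpha}$, and the quadrature error $e^{-j\alpha}-e^{-v}$. The one loose end is that your bound $|\rho^j-e^{-j\alpha}|\le\tfrac{j\alpha^2}{2}\rho^j$ degenerates when $\alpha=\Delta t/\epsilon^2$ is large (inserting it into the geometric series gives a contribution of order $\alpha^3$ to $\|D_n\|_2^2$, not ${\rm O}(1)$), so in that regime you must switch to the trivial bound $|\rho^j-e^{-v}|\le\rho^j+e^{-v}$ --- the same two-regime case analysis you already carry out for the deterministic coefficient --- whereas the paper sidesteps this entirely via the uniform elementary inequality $\sup_{n}\sup_{z>0}\,n\bigl|(1+z)^{-n}-e^{-nz}\bigr|<\infty$.
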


Note that the error estimate~\eqref{eq:cv_m} is not uniform with respect to $\epsilon$. In addition, an additional error term $1/n$ appears, which is not small if $n=1$ for instance. In the proof of Theorem~\ref{theo:main} in Section~\ref{sec:proofmain}, Lemma~\ref{lem:cv_m} is used to obtain upper bounds for $\epsilon\|m_n^{\epsilon,\Delta t}-m^\epsilon(t_n)\|_p$, which are uniform with respect to $\epsilon$. The error terms $1/n$ and $1/n^2$ are summed for $n=1,\ldots,N$ and using
\[
\sum_{n=1}^{N}\frac1n\le C(T)|\log(\Delta t)|~,\quad \sum_{n=1}^{N}\frac{1}{n^2}\le C
\]
with $T=N\Delta t$ is sufficient to obtain the required strong error estimates.

Lemma~\ref{lem:cv_m} has been stated and proved in the PhD thesis~\cite{RR-thesis}. However, due to its importance for the proof of Theorem~\ref{theo:main}, a detailed proof is given below.

\begin{proof}[Proof of Lemma~\ref{lem:cv_m}]
It suffices to consider the case $p=2$, since the random variable $m_n^{\epsilon,\Delta t}-m^\epsilon(t_n)$ is Gaussian.

Using the identities
\begin{align*}
m^\epsilon(t_n)&=e^{-\frac{t_n}{\epsilon^2}}m_0^\epsilon+\frac{1}{\epsilon}\int_{0}^{t_n}e^{-\frac{t_n-s}{\epsilon^2}}d\beta(s)=e^{-\frac{t}{\epsilon^2}}m_0^\epsilon+\frac{1}{\epsilon}\sum_{\ell=0}^{n-1}\int_{t_\ell}^{t_{\ell+1}}e^{-\frac{t_n-s}{\epsilon^2}}d\beta(s)\\
m_n^{\epsilon,\Delta t}&=\frac{1}{(1+\frac{\Delta t}{\epsilon^2})^n}m_0^\epsilon+\frac{1}{\epsilon}\sum_{\ell=0}^{n-1}\frac{1}{(1+\frac{\Delta t}{\epsilon^2})^{n-\ell}}\bigl(\beta(t_{n+1})-\beta(t_n)\bigr),
\end{align*}
and It\^o's isometry formula, one obtains
\begin{align*}
\E[|m_n^{\epsilon,\Delta t}-m^\epsilon(t_n)|^2]&\le 3\Bigl(\frac{1}{(1+\frac{\Delta t}{\epsilon^2})^n}-e^{-\frac{n\Delta t}{\epsilon^2}}\Bigr)^2|m_0^\epsilon|^2\\
&+\frac{3\Delta t}{\epsilon^2}\sum_{\ell=0}^{n-1}\Bigl(\frac{1}{(1+\frac{\Delta t}{\epsilon^2})^{n-\ell}}-e^{-\frac{(n-\ell)\Delta t}{\epsilon^2}}\Bigr)^2\\
&+\frac{3}{\epsilon^2}\sum_{\ell=0}^{n-1}\int_{t_\ell}^{t_{\ell+1}}\Bigl(e^{-\frac{t_n-t_{\ell}}{\epsilon^2}}-e^{-\frac{t_n-s}{\epsilon^2}}\Bigr)^2ds.
\end{align*}
Note that one has the inequality
\[
\underset{n\in\N}\sup~\underset{z\in(0,\infty)}\sup~n\big|\frac{1}{(1+z)^n}-e^{-nz}\big|<\infty.
\]
Indeed, for all $n\in\N$ the maximum of the function
\[
z\in[0,\infty)\mapsto \frac{1}{(1+z)^n}-e^{-nz}\in[0,\infty)
\]
is attained for a real number $z=z_n$ satisfying $e^{-nz_n}=\frac{1}{(1+z_n)^{n+1}}$: as a consequence
\[
\underset{z\in(0,\infty)}\sup~\bigl(\frac{1}{(1+z)^n}-e^{-nz}\bigr)=\frac{1}{(1+z_n)^n}-e^{-nz_n}=\frac{z_n}{(1+z_n)^{n+1}}\le \frac{z_n}{(n+1)z_n}\le \frac{1}{n+1}.
\]
As a consequence of the inequality above, one obtains upper bounds for the first and the second error terms above:
\[
\Bigl(\frac{1}{(1+\frac{\Delta t}{\epsilon^2})^n}-e^{-\frac{n\Delta t}{\epsilon^2}}\Bigr)^2|m_0^\epsilon|^2\le \frac{C}{n^2},
\]
using Assumption~\ref{ass:init}, and
\begin{align*}
\frac{\Delta t}{\epsilon^2}\sum_{\ell=0}^{n-1}\Bigl(\frac{1}{(1+\frac{\Delta t}{\epsilon^2})^{n-\ell}}-e^{-\frac{(n-\ell)\Delta t}{\epsilon^2}}\Bigr)^2&\le C\frac{\Delta t}{\epsilon^2}\sum_{\ell=0}^{n-1}\frac{1}{(n-\ell)^2}\le C\frac{\Delta t}{\epsilon^2}.
\end{align*}
The third error term is treated as follows: using the inequality
\[
\underset{z\in(0,\infty)}\sup~\frac{(e^{-z}-1)^2}{z}<\infty,
\]
one obtains
\begin{align*}
\frac{1}{\epsilon^2}\sum_{\ell=0}^{n-1}\int_{t_\ell}^{t_{\ell+1}}\Bigl(e^{-\frac{t_n-t_{\ell}}{\epsilon^2}}-e^{-\frac{t_n-s}{\epsilon^2}}\Bigr)^2ds&=\frac{1}{\epsilon^2}\sum_{\ell=0}^{n-1}\int_{t_\ell}^{t_{\ell+1}}\Bigl(e^{-\frac{s-t_{\ell}}{\epsilon^2}}-1\Bigr)^2 e^{-\frac{t_n-s}{\epsilon^2}}ds\\
&\le \frac{C}{\epsilon^2}\frac{\Delta t}{\epsilon^2}\int_{0}^{t_n}e^{-\frac{t_n-s}{\epsilon^2}}ds\\
&\le \frac{C\Delta t}{\epsilon^2}.
\end{align*}
Gathering the estimates then concludes the proof of Lemma~\ref{lem:cv_m}.
\end{proof}

\subsection{Properties of the Ornstein--Uhlenbeck component}\label{sec:aux-boundsOU}

This section is devoted to the proof of Lemma~\ref{lem:OU} below, which states a series of results concerning $m^\epsilon(t)$ and $\zeta^\epsilon(t)$, see~\eqref{eq:mepsilon} and~\eqref{eq:zeta} respectively.

\begin{lemma}\label{lem:OU}\hspace{1cm}

$\bullet$ 
One has moment bounds for $m^\epsilon(t)$, uniformly with respect to $\epsilon\in(0,\epsilon_0)$ and $t\ge 0$: for all $p\in[1,\infty)$, one has
\begin{equation}\label{eq:bound-mepsilon}
\underset{\epsilon\in(0,\epsilon_0)}\sup~\underset{t\ge 0}\sup~\|m^\epsilon(t)\|_p<\infty.
\end{equation}
In addition, one has exponential moment bounds for $m^\epsilon(t)$, uniformly with respect to $\epsilon\in(0,\epsilon_0)$ and $t\ge 0$: for all $q\in(0,\infty)$, one has
\begin{equation}\label{eq:expomoments-m}
\underset{\epsilon\in(0,\epsilon_0)}\sup~\underset{t\ge 0}\sup~\E[e^{q|m^\epsilon(t)|}]<\infty.
\end{equation}

$\bullet$ 
For all $t\ge 0$, $\zeta^\epsilon(t)$ converges to $\beta(t)$ when $\epsilon\to 0$, in the following sense: for all $p\in[1,\infty)$, there exists $C_p\in(0,\infty)$ such that for all $\epsilon\in(0,\epsilon_0)$ one has
\begin{equation}\label{eq:cvzeta}
\underset{t\ge 0}\sup~\|\zeta^\epsilon(t)-\beta(t)\|_p\le C_p\epsilon.
\end{equation}

$\bullet$ 
One has exponential moment bounds for $\zeta^\epsilon(t)$ and $\beta(t)$, uniformly with respect to $\epsilon\in(0,\epsilon_0)$ and to $t\in[0,T]$: for all $T\in(0,\infty)$ and all $q\in(0,\infty)$, one has
\begin{equation}\label{eq:expomoments-zeta-beta}
\underset{\epsilon\in(0,\epsilon_0)}\sup~\underset{0\le t\le T}\sup~\E[e^{q|\zeta^\epsilon(T)|}]+\underset{0\le t\le T}\sup~\E[e^{q|\beta(T)|}]<\infty.
\end{equation}

$\bullet$ 
One has bounds on the increments of $\zeta^\epsilon$, uniformly with respect to $\epsilon\in(0,\epsilon_0)$: for all $p\in[1,\infty)$, there exists $C_p\in(0,\infty)$ such that for all $t_1,t_2\ge 0$ one has
\begin{equation}\label{eq:incrementszeta}
\underset{\epsilon\in(0,\epsilon_0)}\sup~\E[|\zeta^\epsilon(t_2)-\zeta^\epsilon(t_1)|^{2p}]\le C_p|t_2-t_1|^p.
\end{equation}
\end{lemma}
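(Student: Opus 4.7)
The plan hinges on a deterministic identity: integrating the SDE for $m^\epsilon$ from (\ref{eq:SDE}) from $0$ to $t$ yields
\[
\zeta^\epsilon(t) = \beta(t) - \epsilon\bigl(m^\epsilon(t) - m_0^\epsilon\bigr),
\]
which reduces most of the work to controlling $m^\epsilon$. From (\ref{eq:mepsilon}), $m^\epsilon(t)$ is a Gaussian random variable with mean $e^{-t/\epsilon^2} m_0^\epsilon$ (uniformly bounded by Assumption~\ref{ass:init}) and variance $\frac12(1-e^{-2t/\epsilon^2}) \le \frac12$, both uniformly in $\epsilon\in(0,\epsilon_0)$ and $t\ge 0$. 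Standard Gaussian moment and exponential-moment formulas then yield (\ref{eq:bound-mepsilon}) and (\ref{eq:expomoments-m}) directly, and the identity above combined with the triangle inequality gives (\ref{eq:cvzeta}) with $C_p$ depending only on $p$.

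For (\ref{eq:expomoments-zeta-beta}), the bound on $\E[e^{q|\beta(t)|}]$ for $t\in[0,T]$ is immediate from $\E[e^{\pm q\beta(t)}]=e^{q^2t/2}$. The bound on $\E[e^{q|\zeta^\epsilon(t)|}]$ follows by writing
\[
e^{q|\zeta^\epsilon(t)|} \le e^{q\epsilon_0|m_0^\epsilon|}\, e^{q|\beta(t)|}\, e^{q\epsilon_0|m^\epsilon(t)|},
\]
applying Cauchy--Schwarz, and invoking the exponential-moment bound on $m^\epsilon$ already obtained.

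The core step is the increment estimate (\ref{eq:incrementszeta}). Using the decomposition, it suffices to control $\E[|\epsilon(m^\epsilon(t_2)-m^\epsilon(t_1))|^{2p}]$ uniformly in $\epsilon$, since Brownian increments contribute the correct order directly. Assuming $t_1\le t_2$ without loss of generality, I would write
\[
m^\epsilon(t_2) - m^\epsilon(t_1) = \bigl(e^{-(t_2-t_1)/\epsilon^2}-1\bigr)\, m^\epsilon(t_1) + \frac{1}{\epsilon}\int_{t_1}^{t_2} e^{-(t_2-s)/\epsilon^2}\, d\beta(s),
\]
where the two summands are independent. The elementary inequality $(1-e^{-z})^2 \le \min(1,z)$ together with It\^o's isometry gives a second-moment bound of the form $C\min(1,(t_2-t_1)/\epsilon^2)$, so multiplying by $\epsilon^2$ yields $\epsilon^2\E[|m^\epsilon(t_2)-m^\epsilon(t_1)|^2]\le C(t_2-t_1)$. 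Since $\epsilon(m^\epsilon(t_2)-m^\epsilon(t_1))$ is Gaussian, all higher moments follow from the second moment, and combined with the moment bound on $\beta(t_2)-\beta(t_1)$ this yields (\ref{eq:incrementszeta}).

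The main obstacle is precisely this last step: the naive bound $\epsilon\cdot \mathrm{O}(1)$ coming from (\ref{eq:bound-mepsilon}) does not vanish as $|t_2-t_1|\to 0$, so one must exploit the fast relaxation of the Ornstein--Uhlenbeck process to extract an extra factor $|t_2-t_1|^{1/2}/\epsilon$ which balances exactly with the prefactor $\epsilon$, producing the desired H\"older-$1/2$ regularity of $\zeta^\epsilon$ uniformly in $\epsilon$.
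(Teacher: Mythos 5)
Your proposal is correct. For the first three bullets it follows the paper's own route: the identity $\zeta^\epsilon(t)=\beta(t)+\epsilon(m_0^\epsilon-m^\epsilon(t))$ obtained by integrating the equation for $m^\epsilon$, the Gaussian mean/variance computation $\E[|m^\epsilon(t)|^2]=e^{-2t/\epsilon^2}|m_0^\epsilon|^2+\tfrac12(1-e^{-2t/\epsilon^2})$ via It\^o's isometry, and Cauchy--Schwarz for the exponential moments of $\zeta^\epsilon$ (the paper splits off $e^{q|\zeta^\epsilon(t)-\beta(t)|}$ rather than $e^{q\epsilon_0|m^\epsilon(t)|}$, an immaterial variation). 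Where you genuinely diverge is the increment bound~\eqref{eq:incrementszeta}, which you rightly identify as the core step. The paper works directly with $\zeta^\epsilon(t_2)-\zeta^\epsilon(t_1)=\frac1\epsilon\int_{t_1}^{t_2}m^\epsilon(s)\,ds$: it bounds the mean by $\epsilon\,|e^{-t_1/\epsilon^2}-e^{-t_2/\epsilon^2}|\,|m_0^\epsilon|\le C|t_2-t_1|^{1/2}$ using the $1/2$-H\"older continuity of $z\mapsto e^{-z}$, and computes the variance as a double time-integral of the Ornstein--Uhlenbeck covariance, which it bounds by $t_2-t_1$. You instead reuse the algebraic identity to reduce to $\epsilon(m^\epsilon(t_2)-m^\epsilon(t_1))$ plus a Brownian increment, and control the former through the Markov decomposition $m^\epsilon(t_2)=e^{-(t_2-t_1)/\epsilon^2}m^\epsilon(t_1)+\frac1\epsilon\int_{t_1}^{t_2}e^{-(t_2-s)/\epsilon^2}d\beta(s)$ together with $(1-e^{-z})^2\le\min(1,z)$, giving $\epsilon^2\E[|m^\epsilon(t_2)-m^\epsilon(t_1)|^2]\le\min(\epsilon^2,C(t_2-t_1))$; both summands are handled by independence and Gaussianity. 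The two arguments yield the same uniform H\"older-$1/2$ bound; yours avoids the double-integral covariance computation and makes the mechanism (fast relaxation trading a factor $\epsilon$ against $|t_2-t_1|^{1/2}/\epsilon$) more transparent, while the paper's direct computation keeps everything inside a single explicit Gaussian mean/variance calculation for the time integral. Either is acceptable; no gap.
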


\begin{proof}[Proof of Lemma~\ref{lem:OU}]

$\bullet$ {\it Proof of the inequalities~\eqref{eq:bound-mepsilon} and~\eqref{eq:expomoments-m}.}

Since $m^\epsilon(t)$ is a Gaussian random variable, it suffices to consider the case $p=2$. For all $\epsilon\in(0,\epsilon_0)$ and $t\ge 0$, one has
\[
m^\epsilon(t)=e^{-\frac{t}{\epsilon^2}}m_0^\epsilon+\frac{1}{\epsilon}\int_{0}^{t}e^{-\frac{t-s}{\epsilon^2}}d\beta(s),
\]
and applying It\^o's isometry formula yields
\[
\E[|m^\epsilon(t)|^2]=e^{-2\frac{t}{\epsilon^2}}|m_0^\epsilon|^2+\frac{1}{\epsilon^2}\int_{0}^{t}e^{-2\frac{t-s}{\epsilon^2}}ds\le \underset{\varepsilon\in(0,\epsilon_0)}\sup~|m_0^\varepsilon|^2+\frac{1}{2}.
\]
Using Assumption~\ref{ass:init} then yields~\eqref{eq:bound-mepsilon}.

The exponential moment bounds~\eqref{eq:expomoments-m} are then a straightforward consequence of the uniform boundedness with respect to $\epsilon\in(0,\epsilon_0)$ and $t\ge 0$ of the mean and of the variance of the Gaussian random variable $m^\epsilon(t)$.

$\bullet$ {\it Proof of the inequality~\eqref{eq:cvzeta}.}

Observe that for all $t\ge 0$ and $\epsilon\in(0,\epsilon_0)$, one has
\[
m^\epsilon(t)-m^\epsilon(0)=-\frac{1}{\epsilon^2}\int_{0}^{t}m^\epsilon(s)ds+\frac{1}{\epsilon}\beta(t),
\]
therefore one has the identity
\[
\zeta^\epsilon(t)=\frac{1}{\epsilon}\int_{0}^{t}m^\epsilon(s)ds=\beta(t)+\epsilon(m_0^\epsilon-m^\epsilon(t)).
\]
Using the inequality~\eqref{eq:bound-mepsilon}, one then obtains the error estimate
\[
\E[|\zeta^\epsilon(t)-\beta(t)|^2]\le 2\epsilon^2\underset{\varepsilon\in(0,\epsilon_0)}\sup~\underset{t\ge 0}\sup~\E[|m^\varepsilon(t)|^2]\le C\epsilon^2.
\]
The constant $C\in(0,\infty)$ does not depend on $\epsilon\in(0,\epsilon_0$ or $t\ge 0$. This gives~\eqref{eq:cvzeta} with $p=2$. Since $\zeta^\epsilon(T)-\beta(T)$ is a Gaussian random variable, this also yields~\eqref{eq:cvzeta} for arbitrary $p\in[1,\infty)$.

$\bullet$ {\it Proof of the inequality~\eqref{eq:expomoments-zeta-beta}.}

First, $\beta(t)$ is a centered Gaussian random variable with variance $\E[|\beta(t)|^2]=t\le T$. As a consequence, one has
\[
\underset{0\le t\le T}\sup~\E[e^{\frac{1}{4T}|\beta(t)|^2}]<\infty.
\]
Let $q\in(0,\infty)$. Using Young's inequality, one has $4q|\beta(t)|\le \frac{1}{4T}|\beta(t)|^2+16Tq^2$, therefore one obtains the exponential moment bounds 
\[
\underset{0\le t\le T}\sup~\E[e^{q|\beta(T)|}]<\infty.
\]
Second, note that
\[
\E[e^{q|\zeta^\epsilon(t)|}]\le \E[e^{q|\beta(t)|+q|\zeta^\epsilon(t)-\beta(T)|}]\le \bigl(\E[e^{2q|\beta(t)|}]\bigr)^{\frac12} \bigl(\E[e^{2q|\zeta^\epsilon(t)-\beta(t)|}]\bigr)^{\frac12},
\]
using the Cauchy--Schwarz inequality. It suffices to deal with the second factor in the right-hand side above, the first factor being upper bounded using the estimate proved above. Owing to the inequality~\eqref{eq:cvzeta}, the Gaussian random variable $\zeta^\epsilon(t)-\beta(t)$ has a mean and a variance which are bounded uniformly with respect to $\epsilon\in(0,\epsilon_0)$ and $t\in[0,\infty)$. As a consequence, there exists $c\in(0,\infty)$ such that
\[
\underset{\epsilon\in(0,\epsilon_0)}\sup~\underset{t\ge 0}\sup~\E[e^{c|\zeta^\epsilon(t)-\beta(t)|^2}]<\infty.
\]
Using Young's inequality gives
\[
2q|\zeta^\epsilon(t)-\beta(t)|\le c|\zeta^\epsilon(t)-\beta(t)|^2+\frac{q^2}{c},
\]
and the conclusion of the proof of the inequality~\eqref{eq:expomoments-zeta-beta} is then straightforward.

$\bullet$ {\it Proof of the inequality~\eqref{eq:incrementszeta}.}

Since the random variable
\[
\zeta^\epsilon(t_2)-\zeta^\epsilon(t_1)=\frac{1}{\epsilon}\int_{t_1}^{t_2}m^\epsilon(s)ds
\]
is Gaussian, it suffices to consider the case $p=1$.

Without loss of generality, assume that $t_1\le t_2$.

First, note that the mean of $\zeta^\epsilon(t_2)-\zeta^\epsilon(t_1)$ satisfies
\begin{align*}
\big|\E[\zeta^\epsilon(t_2)-\zeta^\epsilon(t_1)]\big|&=\frac{1}{\epsilon}\big|\int_{t_1}^{t_2}\E[m^\epsilon(s)]ds\big|=\frac{1}{\epsilon}\int_{t_1}^{t_2}e^{-\frac{s}{\epsilon^2}}ds |m_0^\epsilon|\le \epsilon\bigl(e^{-\frac{t_1}{\epsilon^2}}-e^{-\frac{t_2}{\epsilon^2}}\bigr)|m_0^\epsilon|\le C(t_2-t_1)^{\frac12},
\end{align*}
where $C$ does not depend on $\epsilon$, using the inequality
\[
\underset{z_1,z_2\ge 0}\sup~\frac{|e^{-z_2}-e^{-z_1}|}{|z_2-z_1|^{\frac12}}<\infty
\]
and Assumption~\ref{ass:init}.

Second, the variance of $\zeta^\epsilon(t_2)-\zeta^\epsilon(t_1)$ satisfies
\begin{align*}
\E[\big|&\zeta^\epsilon(t_2)-\zeta^\epsilon(t_1)-\E[\zeta^\epsilon(t_2)-\zeta^\epsilon(t_1)]\big|^2]=\frac{1}{\epsilon^2}\E[\big|\int_{t_1}^{t_2}\bigl(m^\epsilon(s)-\E[m^\epsilon(s)]\bigr)ds\big|^2\\
&=\frac{1}{\epsilon^2}\int_{t_1}^{t_2}\int_{t_1}^{t_2}\E\Bigl[\frac{1}{\epsilon}\int_{0}^{s_1}e^{-\frac{(s_1-s)}{\epsilon^2}}d\beta(s)\frac{1}{\epsilon}\int_{0}^{s_2}e^{-\frac{(s_2-s)}{\epsilon^2}}d\beta(s)\Bigr]ds_1ds_2\\
&=\frac{2}{\epsilon^2}\int_{t_1}^{t_2}\int_{s_1}^{t_2}\frac{1}{\epsilon^2}\int_{0}^{s_1}e^{-\frac{s_2-s_1}{\epsilon^2}}e^{-2\frac{(s_1-s)}{\epsilon^2}}ds ds_2ds_1\\
&\le \frac{1}{\epsilon^2}\int_{t_1}^{t_2}\int_{s_1}^{t_2}e^{-\frac{s_2-s_1}{\epsilon^2}}(1-e^{-\frac{2s_1}{\epsilon^2}})ds_2ds_1\\
&\le t_2-t_1.
\end{align*}
Combining the estimates concludes the proof of the inequality~\eqref{eq:incrementszeta}.
\end{proof}

\subsection{Properties of the discretized Ornstein--Uhlenbeck component}\label{sec:aux-boundsOU-discrete}

This section is devoted to the proof of Lemma~\ref{lem:OUdiscrete} below, which is a variant of Lemma~\ref{lem:OU} from Section~\ref{sec:aux-boundsOU} above, which states a series of results concerning $m_n^{\epsilon,\Delta t}$.

\begin{lemma}\label{lem:OUdiscrete}\hspace{1cm}

$\bullet$ 
One has moment bounds for $m_n^{\epsilon,\Delta t}$, uniformly with respect to $\Delta t\in(0,\Delta t_0)$, $\epsilon\in(0,\epsilon_0)$ and $n\in\N$: for all $p\in[1,\infty)$, one has
\begin{equation}\label{eq:bound-mespsilonDeltat}
\underset{\epsilon\in(0,\epsilon_0)}\sup~\underset{\Delta t\in(0,\Delta t_0)}\sup~\underset{n\in\N}\sup~\|m_n^{\epsilon,\Delta t}\|_p<\infty.
\end{equation}

In addition, one has exponential moment bounds for $m_n^{\epsilon,\Delta t}$, uniformly with respect to $\Delta t\in(0,\Delta t_0$, $\epsilon\in(0,\epsilon_0)$ and $n\in\N$: for all $q\in(0,\infty)$, one has
\begin{equation}\label{eq:expomoments-m-discrete}
\underset{\epsilon\in(0,\epsilon_0)}\sup~\underset{\Delta t\in(0,\Delta t_0)}\sup~\underset{n\in\N}\sup~\E[e^{q|m_n^{\epsilon,\Delta t}|}]<\infty.
\end{equation}

$\bullet$ 
For all $n\ge 0$, $\frac{\Delta tm_{n+1}^{\epsilon,\Delta t}}{\epsilon}$ converges to $\Delta \beta_n=\beta(t_{n+1})-\beta(t_n)$ when $\epsilon\to 0$, in the following sense: for all $p\in[1,\infty)$, there exists $C_p\in(0,\infty)$ such that for all $\epsilon\in(0,\epsilon_0)$ one has
\begin{equation}\label{eq:cvdiscrete}
\underset{\Delta t\in(0,\Delta t_0)}\sup~\underset{n\ge 0}\sup~\|\frac{\Delta tm_{n+1}^{\epsilon,\Delta t}}{\epsilon}-\Delta \beta_n\|_p\le C_p\epsilon.
\end{equation}

$\bullet$ 
One has exponential moment bounds for $\frac{\Delta tm_{n+1}^{\epsilon,\Delta t}}{\epsilon}$, uniformly with respect to $\Delta t\in(0,\Delta t_0)$, $\epsilon\in(0,\epsilon_0)$ and to $n\ge 0$: for all $T\in(0,\infty)$ and all $q\in(0,\infty)$, one has
\begin{equation}\label{eq:expomoments-discrete}
\underset{\epsilon\in(0,\epsilon_0)}\sup~\underset{\Delta t\in(0,\Delta t_0)}\sup~\underset{n\ge 0}\sup~\E[e^{q|\frac{\Delta tm_{n+1}^{\epsilon,\Delta t}}{\epsilon}|}]<\infty.
\end{equation}

$\bullet$
One has an error estimate for $\frac{\Delta tm_{n+1}^{\epsilon,\Delta t}}{\epsilon}$, uniformly with respect to $\epsilon\in(0,\epsilon_0)$: for all $p\in[1,\infty)$, there exists $C_p\in(0,\infty)$ such that one has
\begin{equation}\label{eq:incrementsdiscrete}
\underset{\epsilon\in(0,\Delta t_0)}\sup~\underset{\Delta t\in(0,\Delta t_0)}\sup~\underset{n\ge 0}\sup~\frac{1}{\Delta t^p}\E[|\frac{\Delta tm_{n+1}^{\epsilon,\Delta t}}{\epsilon}|^{2p}]<\infty.
\end{equation}
\end{lemma}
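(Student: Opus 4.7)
The plan is to exploit the fact that $m_n^{\epsilon,\Delta t}$ has an explicit Gaussian distribution. Setting $\alpha_\epsilon=(1+\Delta t/\epsilon^2)^{-1}\in(0,1)$, the recursion in~\eqref{eq:scheme} solves explicitly as
\[
m_n^{\epsilon,\Delta t}=\alpha_\epsilon^n m_0^\epsilon+\frac{1}{\epsilon}\sum_{\ell=0}^{n-1}\alpha_\epsilon^{n-\ell}\Delta\beta_\ell,
\]
which is Gaussian with mean $\alpha_\epsilon^n m_0^\epsilon$ and, by It\^o's isometry, variance $\frac{\Delta t}{\epsilon^2}\sum_{\ell=1}^{n}\alpha_\epsilon^{2\ell}=\frac{1-\alpha_\epsilon^{2n}}{2+\Delta t/\epsilon^2}\le\frac12$. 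Both mean and variance are uniformly bounded in $(\epsilon,\Delta t,n)$ thanks to Assumption~\ref{ass:init}, which yields the $L^p$ bound~\eqref{eq:bound-mespsilonDeltat} by standard Gaussian moment estimates, and then the exponential bound~\eqref{eq:expomoments-m-discrete} by Young's inequality exactly as in the proof of~\eqref{eq:expomoments-m}.

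For the three remaining claims, the key algebraic observation is that rearranging the second line of~\eqref{eq:scheme} gives the identity
\[
\frac{\Delta t m_{n+1}^{\epsilon,\Delta t}}{\epsilon}=\Delta\beta_n+\epsilon\bigl(m_n^{\epsilon,\Delta t}-m_{n+1}^{\epsilon,\Delta t}\bigr),
\]
which is the discrete analogue of the identity $\zeta^\epsilon(t)=\beta(t)+\epsilon(m_0^\epsilon-m^\epsilon(t))$ used in the proof of Lemma~\ref{lem:OU}. Taking $L^p$ norms and applying~\eqref{eq:bound-mespsilonDeltat} immediately yields~\eqref{eq:cvdiscrete}. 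For the exponential bound~\eqref{eq:expomoments-discrete}, I would apply the Cauchy--Schwarz inequality to the exponential of the right-hand side: the Gaussian increment $\Delta\beta_n$ has variance $\Delta t\le T$ and hence uniformly bounded exponential moments (as in~\eqref{eq:expomoments-zeta-beta}), and the residual $\epsilon(m_n^{\epsilon,\Delta t}-m_{n+1}^{\epsilon,\Delta t})$ is Gaussian with mean and standard deviation of order $\epsilon$, so it inherits uniformly bounded exponential moments as well.

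The main obstacle is~\eqref{eq:incrementsdiscrete}, because the two terms in the identity above are not simultaneously of size $\sqrt{\Delta t}$: when $\epsilon\gg\sqrt{\Delta t}$, the residual $\epsilon(m_n^{\epsilon,\Delta t}-m_{n+1}^{\epsilon,\Delta t})$ is only of order $\epsilon$, which may exceed $\sqrt{\Delta t}$. One must therefore track the cancellation by computing the mean and variance of the Gaussian random variable $\frac{\Delta t m_{n+1}^{\epsilon,\Delta t}}{\epsilon}$ directly. Taking expectations in the identity yields the mean
\[
\E\Bigl[\frac{\Delta t m_{n+1}^{\epsilon,\Delta t}}{\epsilon}\Bigr]=\epsilon\alpha_\epsilon^n(1-\alpha_\epsilon)m_0^\epsilon=\frac{\Delta t\,\epsilon}{\epsilon^2+\Delta t}\alpha_\epsilon^n m_0^\epsilon,
\]
which is bounded by $C\sqrt{\Delta t}$ uniformly thanks to the AM--GM estimate $\epsilon^2+\Delta t\ge 2\epsilon\sqrt{\Delta t}$ and $\alpha_\epsilon^n\le 1$. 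The variance is computed from the explicit Gaussian representation and It\^o's isometry, and after the simplification $\frac{\alpha_\epsilon^2}{1-\alpha_\epsilon^2}=\frac{\epsilon^4}{\Delta t(2\epsilon^2+\Delta t)}$ it reduces to
\[
\mathrm{Var}\Bigl(\frac{\Delta t m_{n+1}^{\epsilon,\Delta t}}{\epsilon}\Bigr)=\frac{\Delta t^2\bigl(1-\alpha_\epsilon^{2(n+1)}\bigr)}{2\epsilon^2+\Delta t}\le\Delta t,
\]
uniformly in $(\epsilon,\Delta t,n)$. Standard Gaussian moment bounds then deliver the estimate~\eqref{eq:incrementsdiscrete}.
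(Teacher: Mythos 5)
Your proposal is correct and follows essentially the same route as the paper: the explicit Gaussian representation $m_n^{\epsilon,\Delta t}=\alpha_\epsilon^n m_0^\epsilon+\epsilon^{-1}\sum_{\ell}\alpha_\epsilon^{n-\ell}\Delta\beta_\ell$ with the geometric-series variance bound, reduction to low moments by Gaussianity, the identity $\frac{\Delta t m_{n+1}^{\epsilon,\Delta t}}{\epsilon}=\Delta\beta_n+\epsilon(m_n^{\epsilon,\Delta t}-m_{n+1}^{\epsilon,\Delta t})$ for~\eqref{eq:cvdiscrete}, and a direct computation of the mean and variance of $\frac{\Delta t m_{n+1}^{\epsilon,\Delta t}}{\epsilon}$ for~\eqref{eq:incrementsdiscrete}. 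Your explicit identification of why the triangle-inequality bound $O(\sqrt{\Delta t})+O(\epsilon)$ is insufficient for~\eqref{eq:incrementsdiscrete}, and the AM--GM bound on the mean, are a slightly more detailed account of the same computation the paper carries out.
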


\begin{proof}[Proof of Lemma~\ref{lem:OUdiscrete}]
$\bullet$ {\it Proof of the inequalities~\eqref{eq:bound-mespsilonDeltat} and~\eqref{eq:expomoments-m-discrete}}.

Since $m_n^{\epsilon,\Delta t}$ is a Gaussian random variable, it suffices to consider the case $p=2$. For all $\epsilon\in(0,\epsilon_0)$, $\Delta t\in(0,\Delta t_0)$ and $n\ge 0$, one has
\[
m_n^{\epsilon,\Delta t}=\frac{1}{(1+\frac{\Delta t}{\epsilon^2})^n}m_0^\epsilon+\frac{1}{\epsilon}\sum_{\ell}^{n-1}\frac{1}{(1+\frac{\Delta t}{\epsilon^2})^{n-\ell}}\Delta\beta_\ell.
\]
Since the Gaussian random variables $\bigl(\Delta\beta_\ell\bigr)_{\ell\ge 0}$ are centered and independent, with variance $\Delta t$, one obtains
\[
\E[|m_n^{\epsilon,\Delta t}|^2]=\frac{1}{(1+\frac{\Delta t}{\epsilon^2})^{2n}}|m_0^\epsilon|^2+\frac{\Delta t}{\epsilon^2}\sum_{\ell=0}^{n-1}\frac{1}{(1+\frac{\Delta t}{\epsilon^2})^{2(n-\ell)}}\le |m_0^\epsilon|^2+\frac{1}{2+\frac{\Delta t}{\epsilon^2}}\le \underset{\varepsilon\in(0,\epsilon_0)}\sup~|m_0^\varepsilon|^2+\frac12.
\]
Using Assumption~\ref{ass:init} then yields~\eqref{eq:bound-mespsilonDeltat}.

The exponential moment bounds~\eqref{eq:expomoments-m-discrete} are then a straightforward consequence of the uniform boundedness with respect to $\epsilon\in(0,\epsilon_0)$ and $t\ge 0$ of the mean and of the variance of the Gaussian random variable $m_n^{\epsilon,\Delta t}$.

$\bullet$ {\it Proof of the inequality~\eqref{eq:cvdiscrete}.}

Since $\frac{\Delta tm_{n+1}^{\epsilon,\Delta t}}{\epsilon}-\sqrt{\Delta t}\gamma_n$ is a Gaussian random variable, it suffices to consider the case $p=2$. By the definition of the scheme, one has the equality
\[
\frac{\Delta tm_{n+1}^{\epsilon,\Delta t}}{\epsilon}=\Delta\beta_n+\epsilon(m_n^{\epsilon,\Delta t}-m_{n+1}^{\epsilon,\Delta t}).
\]
Using the inequality~\eqref{eq:bound-mespsilonDeltat}, one then obtains~\eqref{eq:cvdiscrete}.

$\bullet$ {\it Proof of the inequality~\eqref{eq:expomoments-discrete}.}

As a consequence of the inequality~\eqref{eq:cvdiscrete}, the mean and variance of the Gaussian random variable $\frac{\Delta tm_{n+1}^{\epsilon,\Delta t}}{\epsilon}$ are bounded uniformly with respect to $\epsilon\in(0,\epsilon_0)$, $\Delta t\in(0,\Delta t_0)$ and $n\ge 0$, there exists $c\in(0,\infty)$ such that
\[
\underset{\epsilon\in(0,\epsilon_0)}\sup~\underset{\Delta t\in(0,\Delta t_0)}\sup~\underset{n\in\ge 0}\sup~\E[e^{c|\frac{\Delta tm_{n+1}^{\epsilon,\Delta t}}{\epsilon}|^2}]<\infty.
\]
Using Young's inequality then concludes the proof of the inequality~\eqref{eq:expomoments-discrete}. The details are similar to those used in the proof of Lemma~\ref{lem:OU} and are omitted.

$\bullet$ {\it Proof of the inequality~\eqref{eq:incrementsdiscrete}.}

Since $\frac{\Delta tm_{n+1}^{\epsilon,\Delta t}}{\epsilon}$ is a Gaussian random variable, it suffices to consider the case $p=1$.

Using the identity
\[
m_n^{\epsilon,\Delta t}=\frac{1}{(1+\frac{\Delta t}{\epsilon^2})^n}m_0^\epsilon+\frac{}{\epsilon}\sum_{\ell=0}^{n-1}\frac{1}{(1+\frac{\Delta t}{\epsilon^2})^{n-\ell}}\Delta\beta_\ell,
\]
and the equality
\[
\frac{\Delta t}{\epsilon^2}\sum_{\ell=0}^{n-1}\frac{1}{(1+\frac{\Delta t}{\epsilon^2})^{2(n-\ell)}}=\frac{1}{2+\frac{\Delta t}{\epsilon^2}},
\]
one obtains
\begin{align*}
\E[|\frac{\Delta tm_{n+1}^{\epsilon,\Delta t}}{\epsilon}|^2]&\le\Delta t\frac{\frac{\Delta t}{\epsilon^2}}{(1+\frac{\Delta t}{\epsilon})^{2(n+1)}}|m_0^\epsilon|^2+\Delta t\frac{\frac{\Delta t}{\epsilon^2}}{2+\frac{\Delta t}{\epsilon^2}}\\
&\le \Delta t|m_0^\epsilon|^2+\Delta t.
\end{align*}
Using Assumption~\ref{ass:init}, this concludes the proof of the inequality~\eqref{eq:incrementsdiscrete}.
\end{proof}

\section{Proofs of the results from Section~\ref{sec:results-epsilon}}\label{sec:proofs}

This short section is devoted to giving detailed proofs of Proposition~\ref{propo:cvSDE} and~\ref{propo:cvScheme}, concerning the asymptotic behavior of $X^{\epsilon}(t)$ and $X_n^{\epsilon,\Delta t}$ when $\epsilon\to 0$ respectively. The proofs are straightforward consequences of the auxiliary results studied in Section~\ref{sec:aux}.

\subsection{Proof of Proposition~\ref{propo:cvSDE}}

\begin{proof}[Proof of Proposition~\ref{propo:cvSDE}]
Let $T\in(0,\infty)$ and $p\in[1,\infty)$. For all $t\in[0,T]$, the random variables $X^\epsilon(t)$ and $X(t)$ are expressed in terms of the flow map $\varphi$ and of the Gaussian random variables $\zeta^{\epsilon}(t)$ and $\beta(t)$ using~\eqref{eq:SDE-solution} and~\eqref{eq:limitingSDE-solution} respectively. Applying Lemma~\ref{lem:flow}, one obtains
\begin{align*}
\dd_p(X^{\epsilon}(t),X(t))&
=\dd_p\bigl(\varphi(\zeta^\epsilon(t),x_0^\epsilon),\varphi(\beta(t),x_0)\bigr)\\
&\le \|\zeta^\epsilon(t)-\beta(t)\|_p+\|e^{C|\zeta^\epsilon(t)|+C|\beta(t)|}\|_p\dd(x_0^\epsilon,x_0^0).
\end{align*}
Using the inequality~\eqref{eq:cvzeta} and the exponential moment bounds~\eqref{eq:expomoments-zeta-beta} from Lemma~\ref{lem:OU} yields the inequality
\[
\underset{0\le t\le T}\sup~\dd_p(X^{\epsilon}(t),X(t))\le C_p(T)\bigl(\epsilon+\dd(x_0^\epsilon,x_0^0)\bigr).
\]
Using Assumption~\ref{ass:init} then concludes the proof of Proposition~\ref{propo:cvSDE}.
\end{proof}

\subsection{Proof of Proposition~\ref{propo:cvScheme}}

\begin{proof}[Proof of Proposition~\ref{propo:cvScheme}]
Using the definitions~\eqref{eq:scheme} and~\eqref{eq:limitingscheme} of the schemes, for all $n\in\{0,\ldots,N-1\}$, one has
\begin{align*}
\dd(X_{n+1}^{\epsilon,\Delta t},X_{n+1}^{0,\Delta t})&=\dd\bigl(\Phi(\frac{\Delta t m_{n+1}^\epsilon}{\epsilon},X_n^{\epsilon,\Delta t}),\Phi(\Delta\beta_n,X_n^{0,\Delta t})\bigr)\\
&\le Ce^{C|\frac{\Delta tm_{n+1}^\epsilon}{\epsilon}|+C|\Delta\beta_n|}\bigl(|\frac{\Delta tm_{n+1}^\epsilon}{\epsilon}-\Delta\beta_n|+\dd(X_n^{\epsilon,\Delta t},X_n^{0,\Delta t})\bigr),
\end{align*}
using the inequality~\eqref{eq:ass_integrator-Lip} from Assumption~\ref{ass:integrator}. Using H\"older's inequality, the exponential moment bounds~\eqref{eq:expomoments-discrete} and the inequality~\eqref{eq:cvdiscrete} from Lemma~\ref{lem:OUdiscrete}, there exists $C_p\in(0,\infty)$ such that for all $n\in\{0,\ldots,\}$ one has
\[
\dd_p(X_{n+1}^{\epsilon,\Delta t},X_{n+1}^{0,\Delta t})\le C_p\bigl(\epsilon+\dd_{2p}(X_{n}^{\epsilon,\Delta t},X_{n}^{0,\Delta t})\bigr).
\]
Owing to Assumption~\ref{ass:init}, one has
\[
\dd_p(X_0^{\epsilon,\Delta t},X_0^{0,\Delta t})=\dd(x_0^\epsilon,x_0^0)\underset{\epsilon\to 0}\to 0
\]
and it is then straightforward to check recursively that for all $p\in[1,\infty)$ and $n\in\{0,\ldots,N\}$ one has
\[
\dd_p(X_{n+1}^{\epsilon,\Delta t},X_{n}^{0,\Delta t})\underset{\epsilon\to 0}\to 0.
\]
This concludes the proof of Proposition~\ref{propo:cvScheme}.
\end{proof}

\section{Proof of the main result}\label{sec:proofmain}

This section is devoted to the proof of Theorem~\ref{theo:main}.

\begin{proof}[Proof of Theorem~\ref{theo:main}]
Let us introduce an auxiliary process $\bigl(Y_n^{\epsilon,\Delta t}\bigr)_{0\le n\le N}$, defined by
\begin{equation}\label{eq:auxY}
Y_n^{\epsilon,\Delta t}=\Phi\bigl(\epsilon(m_n^\epsilon-m^\epsilon(t_n)),X_n^{\epsilon,\Delta t}\bigr),
\end{equation}
for all $n\in\{0,\ldots,N\}$.

The error is then decomposed as follows: for all $p\in[1,\infty)$, one has
\begin{equation}\label{eq:errordecomp}
\dd_p(X_N^{\epsilon,\Delta t},X^\epsilon(t_N))\le \dd_p(X_N^{\epsilon,\Delta t},Y_N^{\epsilon,\Delta t})+\dd_p(Y_N^{\epsilon,\Delta t},X^\epsilon(t_N)).
\end{equation}

The first error term $\dd_p(X_N^{\epsilon,\Delta t},Y_N^{\epsilon,\Delta t})$ in the right-hand side of~\eqref{eq:errordecomp} is treated as follows: using Lemma~\ref{lem:flow} and the inequality~\eqref{eq:ass_integrator-order} from Assumption~\ref{ass:integrator}, one obtains
\begin{align*}
\dd_p(X_N^{\epsilon,\Delta t},Y_N^{\epsilon,\Delta t})&=\dd_p\bigl(X_N^{\epsilon,\Delta t},\Phi\bigl(\epsilon(m_N^{\epsilon,\Delta t},m^\epsilon(t_N)),X_N^{\epsilon,\Delta t}\bigr)\bigr)\\
&\le \dd_p\bigl(\varphi(0,X_N^{\epsilon,\Delta t}),\varphi\bigl(\epsilon(m_N^{\epsilon,\Delta t}-m^\epsilon(t_N)),X_N^{\epsilon,\Delta t}\bigr)\bigr)\\
&+\dd_p\bigl(\varphi\bigl(\epsilon(m_N^{\epsilon,\Delta t}-m^\epsilon(t_N)),X_N^{\epsilon,\Delta t}\bigr),\Phi\bigl(\epsilon(m_n^{\epsilon,\Delta t}-m^\epsilon(t_N)),X_N^{\epsilon,\Delta t}\bigr)\bigr)\\
&\le C\epsilon\|m_N^{\epsilon,\Delta t}-m^\epsilon(t_N)\|_p+C\epsilon^3\|(m_N^{\epsilon,\Delta t}-m^\epsilon(t_N))^3 e^{C\epsilon|m_N^{\epsilon,\Delta t}-m^\epsilon(t_N)|}\|_p.
\end{align*}
Using H\"older's inequality, the exponential moment bounds~\eqref{eq:expomoments-m} (Lemma~\ref{lem:OU}) and~\eqref{eq:expomoments-m-discrete} (Lemma~\ref{lem:OUdiscrete}), and the inequality~\eqref{eq:cv_m} from Lemma~\ref{lem:cv_m}, one obtains
\begin{align*}
\dd_p(X_N^{\epsilon,\Delta t},Y_N^{\epsilon,\Delta t})&\le C_p\bigl(\|\epsilon(m_N^{\epsilon,\Delta t}-m^\epsilon(t_N))\|_{p}+\|\epsilon(m_N^{\epsilon,\Delta t}-m^\epsilon(t_N))\|_{4p}^3\bigr)\\
&\le C_p\epsilon\bigl(\frac{\sqrt{\Delta t}}{\epsilon}+\frac{1}{N}\bigr)+C_p\epsilon^3\bigl(\frac{\sqrt{\Delta t}}{\epsilon}+\frac{1}{N}\bigr)^3\\
&\le C_p(T)\Delta t^{\frac12},
\end{align*}
using the inequality $1/N=\Delta t/T\le C(T)\Delta t^{1/2}$ in the last step.

It remains to study the second error term $\dd_p(Y_N^{\epsilon,\Delta t},X^\epsilon(t_N))$ in the right-hand side of~\eqref{eq:errordecomp}. The strategy is based on a telescoping sum argument: using the expression~\eqref{eq:SDE-solution} for $X^\epsilon(t_N)=X^\epsilon(T)$ and the equalities $Y_0^{\epsilon,\Delta t}=X_0^{\epsilon,\Delta t}=X^\epsilon(0)$, one has
\begin{align*}
\dd\bigl(Y_N^{\epsilon,\Delta t}&,X^\epsilon(t_N)\bigr)=\dd\bigl(\varphi(0,Y_N^{\epsilon,\Delta t}),\varphi(\zeta^\epsilon(t_N),Y_0^{\epsilon,\Delta t})\bigr)\\
&\le\sum_{n=0}^{N-1}\dd\Bigl(\varphi(\zeta^\epsilon(t_N)-\zeta^\epsilon(t_{n+1}),Y_{n+1}^{\epsilon,\Delta t}),\varphi(\zeta^\epsilon(t_N)-\zeta^{\epsilon}(t_n),Y_{n}^{\epsilon,\Delta t})\Bigr)\\
&\le\sum_{n=0}^{N-1}\dd\Bigl(\varphi(\zeta^\epsilon(t_N)-\zeta^\epsilon(t_{n+1}),Y_{n+1}^{\epsilon,\Delta t}),\varphi\bigl((\zeta^\epsilon(t_N)-\zeta^{\epsilon}(t_{n+1}),\varphi(\zeta^\epsilon(t_{n+1})-\zeta^\epsilon(t_n),Y_{n}^{\epsilon,\Delta t})\bigr)\Bigr),
\end{align*}
where the last inequality is a consequence of the flow property~\eqref{eq:flow-varphi} of the map $\varphi$.

Using the Lipschitz continuity property of the mapping $\varphi(\zeta^\epsilon(t_N)-\zeta^\epsilon(t_{n+1}),\cdot)$ given in Lemma~\ref{lem:flow}, then H\"older's inequality and the exponential moment bounds~\eqref{eq:expomoments-zeta-beta} and~\eqref{eq:expomoments-discrete}, one obtains the inequality
\begin{equation}\label{eq:errorbound}
\begin{aligned}
\dd_p(Y_N^{\epsilon,\Delta t},X^\epsilon(t_N))&\le C\sum_{n=0}^{N-1}\|\dd\bigl(Y_{n+1}^{\epsilon,\Delta t},\varphi(\zeta^\epsilon(t_{n+1})-\zeta^\epsilon(t_n),Y_{n}^{\epsilon,\Delta t})\bigr)e^{C|\zeta^\epsilon(t_N)-\zeta^\epsilon(t_{n+1})|}\|_p\\
&\le C_p(T)\sum_{n=0}^{N-1}\dd_{2p}\bigl(Y_{n+1}^{\epsilon,\Delta t},\varphi(\zeta^\epsilon(t_{n+1})-\zeta^\epsilon(t_n),Y_{n}^{\epsilon,\Delta t})\bigr).
\end{aligned}
\end{equation}
We claim that the following identity holds: for all $n\in\{0,\ldots,N-1\}$, one has
\begin{equation}\label{eq:claim}
Y_{n+1}^{\epsilon,\Delta t}-\Phi(\zeta^\epsilon(t_{n+1})-\zeta^\epsilon(t_n),Y_{n}^{\epsilon,\Delta t})=R_{n,1}^{\epsilon,\Delta t}+R_{n,2}^{\epsilon,\Delta t}
\end{equation}
with the error terms $R_{n,1}^{\epsilon,\Delta t}$ and $R_{n,2}^{\epsilon,\Delta t}$ defined by
\begin{align*}
R_{n,1}^{\epsilon,\Delta t}&=-\delta\Phi\Bigl(\epsilon(m_{n+1}^{\epsilon,\Delta t}-m^\epsilon(t_{n+1})),\frac{\Delta tm_{n+1}^{\epsilon,\Delta t}}{\epsilon},X_n^\epsilon\Bigr)\\
R_{n,2}^{\epsilon,\Delta t}&=\delta\Phi\Bigl(\zeta^\epsilon(t_{n+1})-\zeta^\epsilon(t_n),\epsilon(m_n^{\epsilon,\Delta t}-m^\epsilon(t_n)),X_n^\epsilon\Bigr).
\end{align*}
using the auxiliary function $\delta\Phi$ defined by~\eqref{eq:deltaPhi}.

The proof of the claim~\eqref{eq:claim} is performed in two steps. First, using the definition~\eqref{eq:auxY} of the auxiliary random variable $Y_{n+1}^{\epsilon,\Delta t}$ and the definition~\eqref{eq:scheme} of the numerical scheme, one has
\begin{align*}
Y_{n+1}^{\epsilon,\Delta t}&=\Phi(\epsilon(m_{n+1}^{\epsilon,\Delta t}-m^\epsilon(t_{n+1})),X_{n+1}^{\epsilon,\Delta t})\\
&=\Phi(\epsilon(m_{n+1}^{\epsilon,\Delta t}-m^\epsilon(t_{n+1})),\Phi(\frac{\Delta tm_{n+1}^{\epsilon,\Delta t}}{\epsilon},X_n^\epsilon))\\
&=\Phi(\epsilon(m_{n+1}^\epsilon-m^\epsilon(t_{n+1}))+\frac{\Delta tm_{n+1}^{\epsilon,\Delta t}}{\epsilon},X_n^\epsilon)+R_{n,1}^{\epsilon,\Delta t}.
\end{align*}
Second, using the identities
\begin{align*}
\frac{\Delta tm_{n+1}^\epsilon}{\epsilon}&=\Delta \beta_n+\epsilon(m_n^{\epsilon,\Delta t}-m_{n+1}^{\epsilon,\Delta t})\\
\zeta^\epsilon(t_{n+1})-\zeta^\epsilon(t_n)&=\int_{t_n}^{t_{n+1}}\frac{m^\epsilon(t)}{\epsilon}dt=\beta(t_{n+1})-\beta(t_n)+\epsilon(m^\epsilon(t_n)-m^\epsilon(t_{n+1}))
\end{align*}
and $\Delta \beta_n=\beta(t_{n+1})-\beta(t_n)$, one obtains
\begin{align*}
Y_{n+1}^{\epsilon,\Delta t}-R_{n,1}^{\epsilon,\Delta t}&=\Phi(\epsilon (m_n^{\epsilon,\Delta t}-m^\epsilon(t_{n+1}))+\Delta \beta_n,X_n^\epsilon)\\
&=\Phi(\Delta \beta_n+\epsilon(m^\epsilon(t_n)-m^\epsilon(t_{n+1}))+\epsilon(m_n^{\epsilon,\Delta t}-m^\epsilon(t_n)),X_n^\epsilon)\\
&=\Phi(\zeta^\epsilon(t_{n+1})-\zeta^\epsilon(t_n)+\epsilon(m_n^{\epsilon,\Delta t}-m^\epsilon(t_n)),X_n^\epsilon)\\
&=\Phi(\zeta^\epsilon(t_{n+1})-\zeta^\epsilon(t_n),\Phi(\epsilon(m_n^{\epsilon,\Delta t}-m^\epsilon(t_n)),X_n^\epsilon))+R_{n,2}^{\epsilon,\Delta t}\\
&=\Phi(\zeta^\epsilon(t_{n+1})-\zeta^\epsilon(t_n),Y_{n}^{\epsilon,\Delta t})+R_{n,2}^{\epsilon,\Delta t},
\end{align*}
using the definition~\eqref{eq:auxY} of $Y_n^{\epsilon,\Delta t}$ in the last step. This concludes the proof of the claim~\eqref{eq:claim}.

Combining~\eqref{eq:errorbound} and~\eqref{eq:claim}, one obtains the following upper bound for the error:
\begin{align*}
\dd_p(Y_N^{\epsilon,\Delta t},X^\epsilon(t_N))&\le C_p(T)\sum_{n=0}^{N-1}\dd_{2p}\Bigl(\Phi\bigl(\zeta^\epsilon(t_{n+1})-\zeta^\epsilon(t_n),Y_{n}^{\epsilon,\Delta t}\bigr),\varphi\bigl(\zeta^\epsilon(t_{n+1})-\zeta^\epsilon(t_n),Y_{n}^{\epsilon,\Delta t}\bigr)\Bigr)\\
&+C_p(T)\sum_{n=0}^{N-1}\dd_{2p}(R_{n,1}^{\epsilon,\Delta t},0)+C_p(T)\sum_{n=0}^{N-1}\dd_{2p}(R_{n,2}^{\epsilon,\Delta t},0).
\end{align*}
To conclude the proof, it remains to prove upper bounds for the three terms in the right-hand side of the inequality above.

$\bullet$ Using the inequality~\eqref{eq:ass_integrator-order} from Assumption~\ref{ass:integrator}, H\"older's inequality, the exponential moment bounds~\eqref{eq:expomoments-zeta-beta}, and finally the inequality~\eqref{eq:incrementszeta}, one obtains, for all $n\in\{0,\ldots,N-1\}$,
\begin{align*}
\dd_{2p}\Bigl(\Phi\bigl(\zeta^\epsilon(t_{n+1})-\zeta^\epsilon(t_n),Y_{n}^{\epsilon,\Delta t}\bigr),\varphi\bigl(\zeta^\epsilon(t_{n+1})-\zeta^\epsilon(t_n),Y_{n}^{\epsilon,\Delta t}\bigr)\Bigr)
&\le C\|\bigl(\zeta^\epsilon(t_{n+1})-\zeta^\epsilon(t_n)\bigr)^3e^{C|\zeta^\epsilon(t_{n+1})-\zeta^\epsilon(t_n)|}\|_{2p}\\
&\le C_p(T)\|\zeta^\epsilon(t_{n+1})-\zeta^\epsilon(t_n)\|_{7p}^3\\
&\le C_p(T)\Delta t^{\frac32}.
\end{align*}
Therefore one has the upper bound
\begin{equation}\label{eq:error0}
\sum_{n=0}^{N-1}\dd_{2p}\Bigl(\Phi\bigl(\zeta^\epsilon(t_{n+1})-\zeta^\epsilon(t_n),Y_{n}^{\epsilon,\Delta t}\bigr),\varphi\bigl(\zeta^\epsilon(t_{n+1})-\zeta^\epsilon(t_n),Y_{n}^{\epsilon,\Delta t}\bigr)\Bigr)\le C_p(T)\Delta t^{\frac12}.
\end{equation}

$\bullet$ Using the inequality~\eqref{eq:lem-integrator} from Lemma~\ref{lem:integrator} with $t_1=\epsilon(m_{n+1}^{\epsilon,\Delta t}-m^\epsilon(t_{n+1}))$ and $t_2=\frac{\Delta tm_{n+1}^{\epsilon,\Delta t}}{\epsilon}$, one obtains, for all $n\in\{0,\ldots,N-1\}$,
\begin{align*}
\dd_{2p}(R_{n,1}^{\epsilon,\Delta t},0)&\le C\|\epsilon(m_{n+1}^{\epsilon,\Delta t}-m^\epsilon(t_{n+1}))\bigl(\frac{\Delta tm_{n+1}^{\epsilon,\Delta t}}{\epsilon}\bigr)^2e^{C|\epsilon(m_{n+1}^{\epsilon,\Delta t}-m^\epsilon(t_{n+1}))|}e^{C|\frac{\Delta tm_{n+1}^{\epsilon,\Delta t}}{\epsilon}|}\|_{2p}\\
&+C\|\epsilon^2(m_{n+1}^{\epsilon,\Delta t}-m^\epsilon(t_{n+1}))^2\frac{\Delta tm_{n+1}^{\epsilon,\Delta t}}{\epsilon}e^{C|\epsilon(m_{n+1}^{\epsilon,\Delta t}-m^\epsilon(t_{n+1}))|}e^{C|\frac{\Delta tm_{n+1}^{\epsilon,\Delta t}}{\epsilon}|}\|_{2p}\\
&\le C_p(T)\|\epsilon(m_{n+1}^{\epsilon,\Delta t}-m^\epsilon(t_{n+1}))\bigl(\frac{\Delta tm_{n+1}^{\epsilon,\Delta t}}{\epsilon}\bigr)^2\|_{3p}\\
&+C_p(T)\|\epsilon^2(m_{n+1}^{\epsilon,\Delta t}-m^\epsilon(t_{n+1}))^2\frac{\Delta tm_{n+1}^{\epsilon,\Delta t}}{\epsilon}\|_{3p},
\end{align*}
using H\"older's inequality and the exponential moment bounds~\eqref{eq:expomoments-m},~\eqref{eq:expomoments-m-discrete} and~\eqref{eq:expomoments-discrete}. Finally, using H\"older's inequality, the error estimate~\eqref{eq:cv_m} from Lemma~\ref{lem:cv_m} and the moment bound~\eqref{eq:incrementsdiscrete}, one obtains, for all $n\in\{0,\ldots,N-1\}$,
\[
\dd_{2p}(R_{n,1}^{\epsilon,\Delta t},0)\le C_p(T)\bigl(\Delta t^{\frac12}+\frac{1}{n+1}\bigr)\Delta t+C_p(T)\bigl(\Delta t+\frac{1}{(n+1)^2}\bigr)\Delta t^{\frac12}.
\]
Therefore one has the upper bound
\begin{equation}\label{eq:error1}
\sum_{n=0}^{N-1}\dd_{2p}(R_{n,1}^{\epsilon,\Delta t},0)\le C_p(T)\Delta t^{\frac12}.
\end{equation}

$\bullet$ Note that $R_{0,2}^{\epsilon,\Delta t}=0$. Using the inequality~\eqref{eq:lem-integrator} from Lemma~\ref{lem:integrator} with $t_1=\zeta^\epsilon(t_{n+1})-\zeta^\epsilon(t_n)$ and $t_2=\epsilon(m_{n}^{\epsilon,\Delta t}-m^\epsilon(t_{n}))$, one obtains, for all $n\in\{1,\ldots,N-1\}$,
\begin{align*}
\dd_{2p}(R_{n,2}^{\epsilon,\Delta t},0)&\le C\|(\zeta^\epsilon(t_{n+1})-\zeta^\epsilon(t_n))\bigl(\epsilon(m_{n}^{\epsilon,\Delta t}-m^\epsilon(t_{n}))\bigr)^2e^{C|\zeta^\epsilon(t_{n+1})-\zeta^\epsilon(t_n)|}e^{C|\epsilon(m_{n}^{\epsilon,\Delta t}-m^\epsilon(t_{n}))|}\|_{2p}\\
&+C\|\bigl(\zeta^\epsilon(t_{n+1})-\zeta^\epsilon(t_n)\bigr)^2\epsilon(m_{n}^{\epsilon,\Delta t}-m^\epsilon(t_{n}))e^{C|\zeta^\epsilon(t_{n+1})-\zeta^\epsilon(t_n)|}e^{C|\epsilon(m_{n}^{\epsilon,\Delta t}-m^\epsilon(t_{n}))|}\|_{2p}\\
&\le C_p(T)\|(\zeta^\epsilon(t_{n+1})-\zeta^\epsilon(t_n))\bigl(\epsilon(m_{n}^{\epsilon,\Delta t}-m^\epsilon(t_{n}))\bigr)^2\|_{3p}\\
&+C_p(T)\|\bigl(\zeta^\epsilon(t_{n+1})-\zeta^\epsilon(t_n)\bigr)^2\epsilon(m_{n}^{\epsilon,\Delta t}-m^\epsilon(t_{n}))\|_{3p}
\end{align*}
using H\"older's inequality and the exponential moment bounds~\eqref{eq:expomoments-m},~\eqref{eq:expomoments-m-discrete} and~\eqref{eq:expomoments-zeta-beta}. Finally, using H\"older's inequality, the error estimate~\eqref{eq:cv_m} from Lemma~\ref{lem:cv_m} and the moment bound~\eqref{eq:incrementszeta}, one obtains, for all $n\in\{1,\ldots,N-1\}$,
\[
\dd_{2p}(R_{n,2}^{\epsilon,\Delta t},0)\le C_p(T)\Delta t^{\frac12}\bigl(\Delta t+\frac{1}{n^2}\bigr)+C_p(T)\Delta t\bigl(\Delta t^{\frac12}+\frac{1}{n}\bigr).
\]
Therefore one has the upper bound
\begin{equation}\label{eq:error2}
\sum_{n=0}^{N-1}\dd_{2p}(R_{n,2}^{\epsilon,\Delta t},0)\le C_p(T)\Delta t^{\frac12}.
\end{equation}

Combining the upper bound~\eqref{eq:errorbound} and the three inequalities~\eqref{eq:error0},~\eqref{eq:error1} and~\eqref{eq:error2}, one obtains
\[
\dd_p(Y_N^{\epsilon,\Delta t},X^\epsilon(t_N))\le C_p(T)\Delta t^{\frac12}.
\]
This concludes the treatment of the second error term in the right-hand side of~\eqref{eq:errordecomp}. Combining the two upper bounds yields~\eqref{eq:main}, which concludes the proof of Theorem~\ref{theo:main}.
\end{proof}

\section{Numerical experiments}\label{sec:num}

The objective of this section is to illustrate Theorem~\ref{theo:main} with numerical experiments. Set $d=1$, $T=1$, $x_0=0$ and $\sigma(x)=\cos(x)$ for all $x\in\T=\T^1=\mathbb{R}/\mathbb{Z}$. The reference time-step size denoted by $h_{\rm ref}=\Delta t_{\rm ref}=2^{-18}$, and the time-step size $h=\Delta t$ takes values in $\{2^{-6},\ldots,2^{-16}\}$. The mean-square error is estimated by averaging the error over $M_s$ samples. The error is represented in logarithmic scales. The integrator $\Phi$ is given by Heun's method: $\Phi(t,x)=x+\frac{t}{2}\bigl(\sigma(x)+\sigma(x+t\sigma(x))\bigr)$ for all $t\in\R$ and $x\in\T$. One observes similar results when using for instance the explicit midpoint method, the numerical results are not reported.

Let us first confirm that the order of convergence of the limiting scheme~\eqref{eq:limitingscheme} is equal to $1$, see Proposition~\ref{propo:orderlimitingscheme}. This is illustrated by Figure~\ref{fig:lim}.
\begin{figure}[h]
\centering
\includegraphics[height=5cm,keepaspectratio]{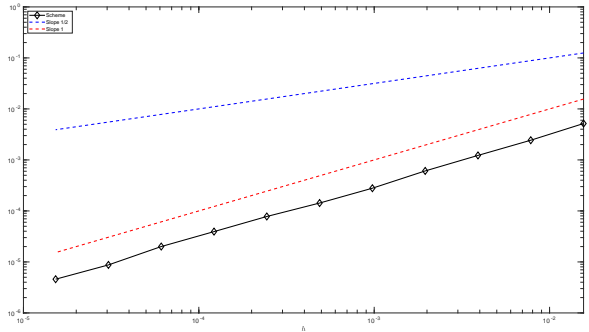}
\caption{Mean-square error as a function of the time-step size $h=\Delta t$ for the limiting scheme. The dotted lines have slopes $1/2$ and $1$.}
\label{fig:lim}
\end{figure}

In Figure~\ref{fig:1}, one has $\epsilon\in\{0.04,0.02,0.01\}$ and $M_s=10^3$. 
\begin{figure}[h]
\centering
\includegraphics[height=7cm,keepaspectratio]{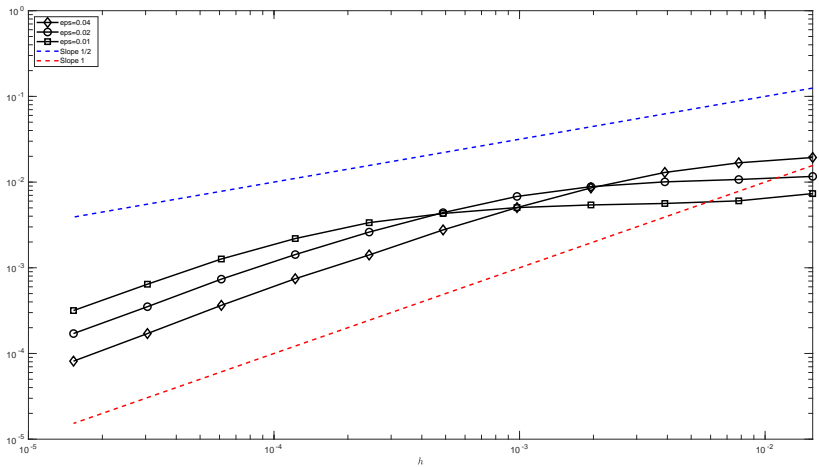}
\caption{Mean-square error as a function of the time-step size $h=\Delta t$, with $\epsilon=0.04,0.02,0.01$. The dotted lines have slopes $1/2$ and $1$.}
\label{fig:1}
\end{figure}
One observes that for large values of $h$, the error decreases when $\epsilon$ decreases, whereas for small values of $h$ the error increases when $\epsilon$ decreases. For any value of $\epsilon$, for sufficiently small values of $h$ the order of convergence is equal to $1$. However, for larger values of $h$ one seems to observe a lower order of convergence.

Let us provide an additional numerical experiment, for different values of $\epsilon$, in order to confirm the results of Figure~\ref{fig:1} and their interpretation. In Figure~\ref{fig:2}, one has $\epsilon\in\{0.1,0.01,0.001\}$ and $M_s=10^2$.
\begin{figure}[h]
\centering
\includegraphics[height=7cm,keepaspectratio]{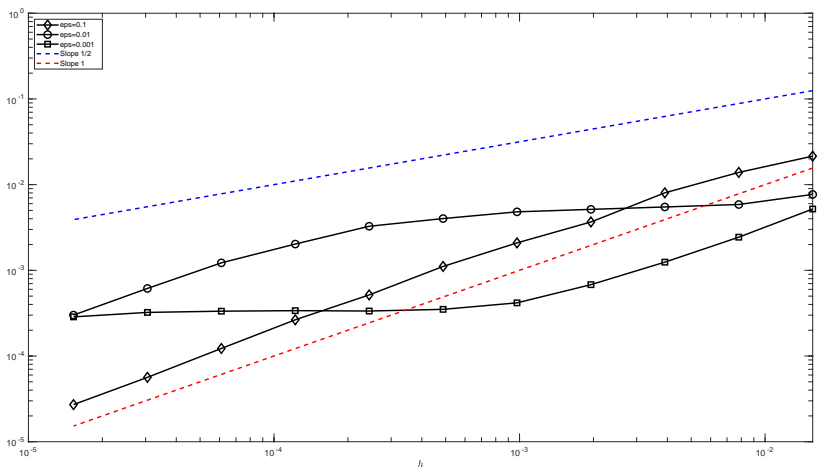}
\caption{Mean-square error as a function of the time-step size $h=\Delta t$, with $\epsilon=0.1,0.01,0.001$.
The dotted lines have slopes $1/2$ and $1$.}
\label{fig:2}
\end{figure}
For the largest value $\epsilon=0.1$, one observes a decrease of the mean-square error with order of convergence $1$. For the other values of $\epsilon$, the behavior is different. For $\epsilon=0.01$, the error saturates for large values of $h$, and decreases with order $1$ when $h$ is sufficiently small -- this is the same behavior as observed in Figure~\ref{fig:1}. For $\epsilon=0.001$, one observes first a decrease with order $1$ for large $h$, and then the error saturates for smaller values of $h$. If one could decrease the values of $h$, one would again observe a decrease of the error with order $1$ for this value of $\epsilon$.

Owing to the results of Figure~\ref{fig:1} and~\ref{fig:2}, it is not possible to replace the order of convergence $1/2$ in the error estimate given in Theorem~\ref{theo:main} by $1$. The behavior of the mean-square error when $\Delta t$ and $\epsilon$ vary is not trivial. However, it is remarkable to be able to obtain a uniform error estimate with respect to $\epsilon$, with order of convergence $1/2$ with respect to $\Delta t$.

Is is also worth providing numerical experiments when the standard explicit Euler scheme is used, i.\,e. $\Phi(t,x)=x+t\sigma(x)$. In that case, as explained in Section~\ref{sec:results-main}, Theorem~\ref{theo:main} does not hold. Figure~\ref{fig:nonAP} gives results with $M_s=10^2$, and with $\epsilon\in\{0.04,0.02,0.01\}$ (left) and $\epsilon\in\{0.1,0.01,0.001\}$ (right).

\begin{figure}[h]
\centering
\includegraphics[height=4cm,keepaspectratio]{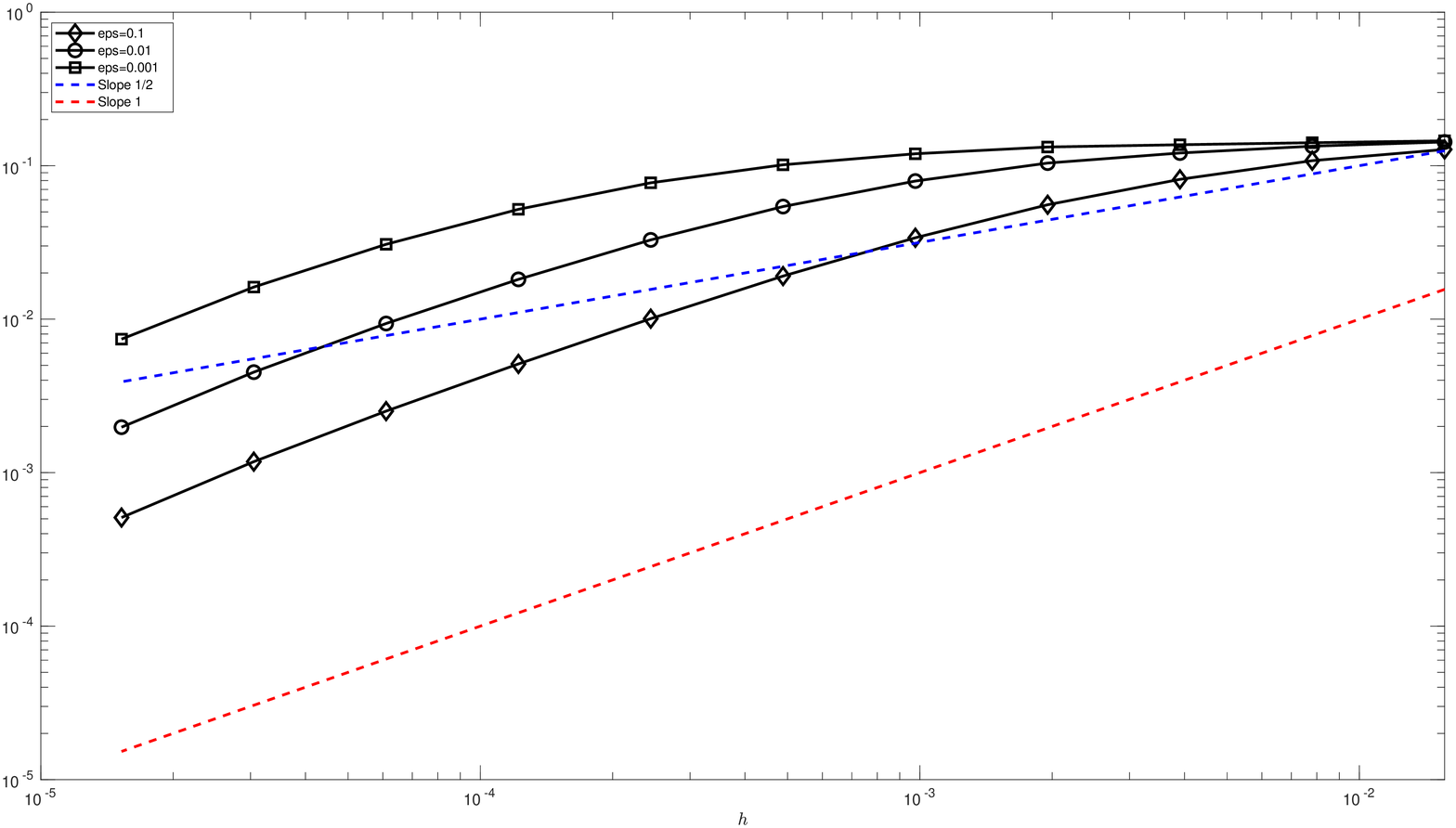}
\includegraphics[height=4cm,keepaspectratio]{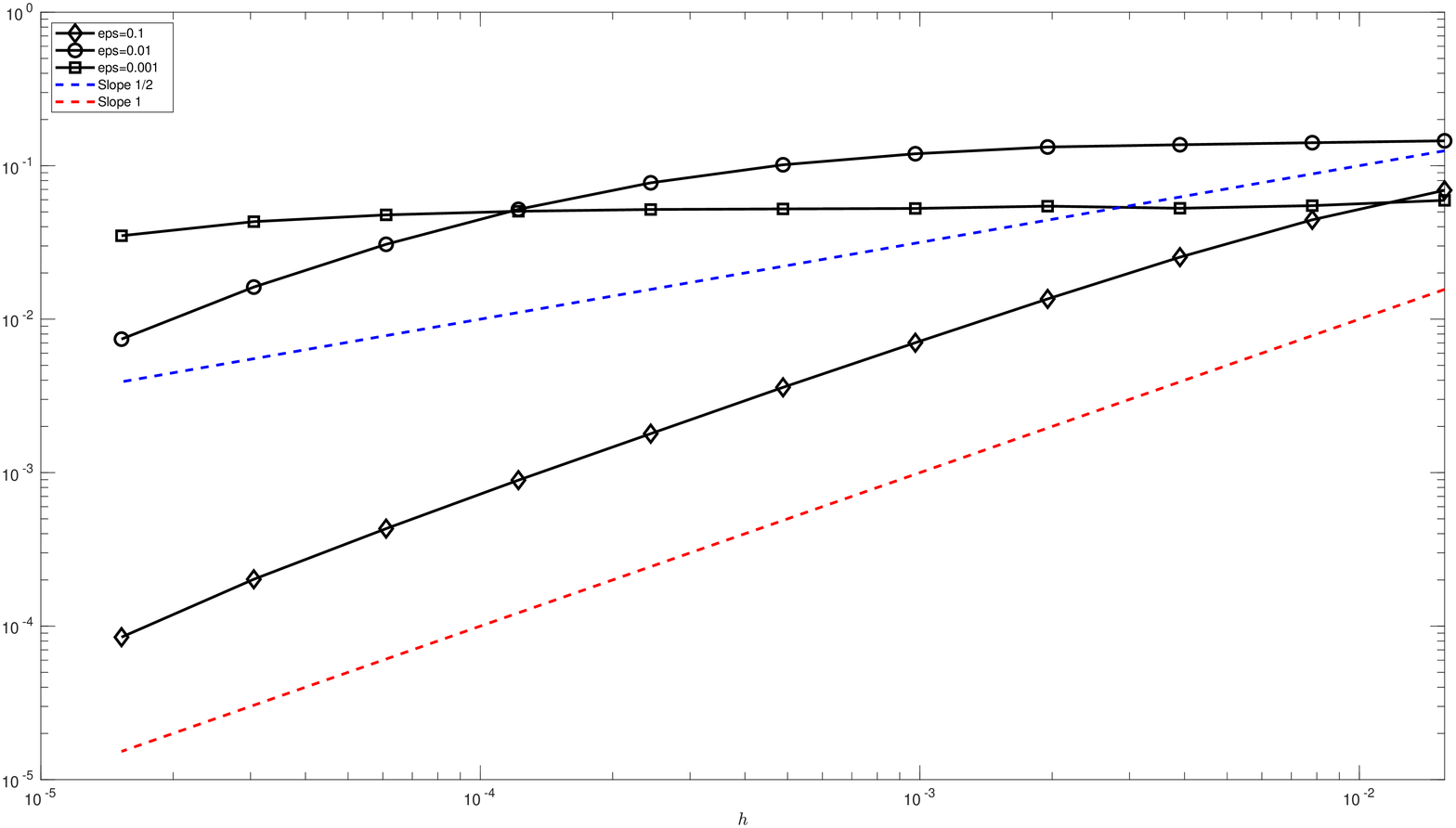}
\caption{Mean-square error for the standard Euler scheme as a function of the time-step size $h=\Delta t$, with $\epsilon=0.04,0.02,0.01$ (left) and $\epsilon=0.1,0.01,0.001$ (right).
The dotted lines have slopes $1/2$ and $1$.}
\label{fig:nonAP}
\end{figure}

In Figure~\ref{fig:nonAP}, one observes that for a given value of $\epsilon$, the error decreases when $h$ is sufficiently small, but the error is large for large values of $h$, contrary to what can be seen in Figures~\ref{fig:1} and~\ref{fig:2}. For $\epsilon=0.001$, the values of $h$ are not sufficiently small to observe the decrease of the error in Figure~\ref{fig:nonAP}. This comparison illustrates the superiory of the scheme~\eqref{eq:scheme} studied in this article for the approximation of the multiscale SDE system~\eqref{eq:SDE}, and how Theorem~\ref{theo:main} is a non trivial theoretical results with a huge importance in practice.

\section*{Acknowledgements}
This work is partially supported by the projects ADA (ANR-19-CE40-0019-02) and SIMALIN (ANR-19-CE40-0016) operated by the French National Research Agency.


\end{document}